\newtheorem{theo}{Theorem}[section]
\newtheorem{corol}[theo]{Corollary}
\newtheorem{prop}[theo]{Proposition}
\newtheorem{lem}[theo]{Lemma}
\theoremstyle{definition}
\newtheorem{defi}[theo]{Definition}
\newtheorem{example}{Example}
\newenvironment{rem}{\bigskip\emph{Remark.}}{}
\newenvironment{nota}{\emph{Notation.}}{}
\def\Rr{\mathbf{R}}
\def\Zz{\mathbf{Z}}
\let\bord\partial
\let\bydef\emph
\let\epsi\varepsilon
\def\var{mani\-fold}
\def\svar{sub\-mani\-fold}
\def\irr{ir\-redu\-ci\-ble}
\def\incomp{in\-com\-pres\-sible}
\def\Ric{\mathrm{Ric}}
\def\Rm{\mathrm{Rm}}
\def\vol{\mathrm{vol}}
\def\hyp{\mathrm{hyp}}
\def\Rmin{R_\mathrm{min}}
\def\Rmax{R_\mathrm{max}}
\begin{document}

\title{Some applications of Ricci flow to 3-manifolds}
\author{Sylvain Maillot}

\maketitle

\section{Introduction}

The purpose of this text is to describe some applications of Ricci flow to questions about the
topology and geometry
of $3$-\var s. The most spectacular achievement in the recent years was the proof of
W.~Thurston's Geometrization Conjecture by G.~Perelman~\cite{Per1,Per2,Per3}.  In Sections~2--6
we outline a proof of the Geometrization Conjecture which is based in part on Perelman's
ideas. This is joint work with L.~Bessi\`eres, G.~Besson, M.~Boileau, and J.~Porti. Some details
have already appeared in the preprint~\cite{b3mp:hyper6}; the rest will be contained
in our forthcoming monograph~\cite{b3mp:book}. In Section~7 we announce some results
on the topological classification of (possibly noncompact) $3$-manifolds with positive
scalar curvature. These results follow from an extension of those discussed in Section~6, and
are joint work with L.~Bessi\`eres and G.~Besson. Details will appear elsewhere.

The theory of low-dimensional Ricci flows relies on techniques and insights from riemannian geometry,
geometric analysis, and topology. In this text, we deliberately adopt the topologist's viewpoint; thus we shall focus on the topological and geometric arguments rather than the analytic aspects.
The Ricci flow is considered here as a tool to prove topological and geometric theorems.
Therefore, we sometimes do not prove the strongest possible estimates on Ricci flow solutions, but
rather strengthen the theorems that are used to deduce topological consequences
(e.g.~Theorem~\ref{reconnait topo} or Proposition~\ref{summary}) by weakening their
hypotheses. It should go
without saying that the Ricci flow as a mathematical object is worth studying for its
own sake, which justifies trying to obtain the best possible results on the analytic side.

This text is mostly intended for nonexperts. The prerequisites are basic concepts in
algebraic topology and differential geometry; previous knowledge of $3$-\var\ theory
or Ricci flow is of course useful, but not necessary. We have endeavored to
make the various  parts of the proof as independent from each other as possible,
in order to clarify its overall structure.

Special paragraphs marked with an asterisk (*) are geared toward the experts who wish to 
understand the differences between the proof of the Geometrization Theorem presented
here and Perelman's proof. Only in those paragraphs is some familiarity with Perelman's papers implicitly
assumed.

\paragraph{Acknowledgments}
I would like to thank all the people who have, directly or indirectly, helped me understand
Perelman's Ricci flow papers. In addition to my above-mentioned collaborators, this includes the
participants of the workshops in Grenoble 2004 and M\"unchen 2005, especially Bernhard
Leeb, Thomas Schick, Hartmut Weiss and Jonathan Dinkelbach. I am also indebted to
Bruce Kleiner and John Lott for writing their notes~\cite{Kle-Lot}, as well as the Clay
Mathematics Institute for financial support.

Numerous conversations, in particular with Thomas Delzant and Olivier Biquard, have helped
shape my thoughts on the subject. I also thank the organizers of the seminars and
conferences where I have had the opportunity to present this work. The present text
is based in part on notes from those talks. Lastly, I would like to thank Laurent Bessi\`eres and G\'erard Besson for their
remarks on a preliminary version of this paper.

\section{Geometrization of 3-manifolds}\label{sec:geom}

\emph{All $3$-manifolds considered in this text are assumed to be smooth, connected, and orientable.} The $n$-dimensional sphere (resp.~$n$-dimensional
real projective space, resp.~$n$-dimensional torus) is denoted by $S^n$ (resp.~$RP^n$,
resp.~$T^n$).
General references for this section are~\cite{scott:geometries,boileau:bourbaki,bmp,hempel,jaco:lect}.

We are mostly interested in compact manifolds, and more precisely \bydef{closed}
manifolds, i.e., compact manifolds whose boundary is empty.
A closed $3$-manifold is \bydef{spherical} if it admits a riemannian metric of constant
positive sectional curvature. Equivalently, a $3$-manifold is spherical if it can be obtained as
a quotient of $S^3$ by a finite group of isometries acting freely. In particular, the universal
cover of a spherical $3$-manifold is diffeomorphic to $S^3$.

In this text, a $3$-manifold $H$ with empty boundary is called \bydef{hyperbolic} if it admits a complete riemannian metric of constant sectional curvature $-1$ and finite volume. Such a
metric is called a \bydef{hyperbolic metric}. A hyperbolic manifold may be closed, or have finitely
many ends, called \bydef{cusps}, which admit neighborhoods diffeomorphic to $T^2\times [0,+\infty)$. The hyperbolic metric, which by Mostow rigidity is unique up to isometry, is denoted by $g_\hyp$. By extension, a compact
manifold $M$ with nonempty boundary is called hyperbolic if its interior is hyperbolic. Thus
in this case $\bord M$ is a union of tori.

A $3$-manifold is \bydef{Seifert fibered}, or simply \bydef{Seifert}, if it is the total space
of a fiber bundle over a $2$-dimensional orbifold. It is well-known that all spherical $3$-manifolds are
Seifert fibered.  Seifert manifolds have been classified, and form a
well-understood class of $3$-\var s. Among them, spherical \var s are exactly those
with finite fundamental group.

To state the Geometrization Theorem, we still need two definitions: a $3$-manifold $M$
is \bydef{irreducible} if every embedding of the $2$-sphere into $M$ can be extended to
an embedding of the $3$-ball. An orientable embedded surface $F\subset M$ of positive genus is called
\bydef{incompressible} if the group homomorphism $\pi_1F\to \pi_1 M$ induced by
the inclusion map is injective.

The main purpose of this article is to present a proof of the following result:
\begin{theo}[Perelman]\label{geometrisation}
Let $M$ be a closed, irreducible $3$-manifold. Then $M$ is hyperbolic, Seifert fibered, or
contains an embedded incompressible torus.
\end{theo}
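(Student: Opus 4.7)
The plan is to construct a Ricci flow with surgery starting from an arbitrary Riemannian metric $g_0$ on $M$ and to analyze its long-time behavior. First I would invoke a long-time existence theorem for Ricci flow with surgery as the main analytic input: the singularities forming in finite time are modeled on $\varepsilon$-necks and caps, so surgery amounts to cutting along $2$-spheres and gluing in standard $3$-balls. Because $M$ is \irr, each surgery alters topology only by discarding spherical space forms or performing trivial $S^2$ splittings, and the diffeomorphism type of $M$ is recovered from the post-surgery pieces up to connected sum with copies of $S^3$.

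I would then distinguish according to whether the flow becomes extinct in finite time. In the finite-extinction case, an extinction-time analysis of the canonical neighborhoods forces $M$ to be a connected sum of spherical space forms and copies of $S^2\times S^1$; irreducibility rules out $S^2\times S^1$ summands and forces $M$ itself to be spherical, hence Seifert fibered.

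In the long-time case the interesting geometry appears. Rescaling the metric by $1/t$, I would appeal to a thick--thin decomposition: for each large $t$ the thick part converges in the pointed Cheeger--Gromov sense to a finite-volume complete hyperbolic $3$-\var\ $H$, while the thin part collapses at scale $1$ with sectional curvature bounded below. This produces an embedding of a compact core $H'$ of $H$ into $M$, with $\bord H'$ a disjoint union of tori, and a complementary \emph{thin} submanifold $G$ with $M = H' \cup G$.

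The main obstacle is twofold. First, the tori $\bord H'$ must be shown to be \incomp\ in $M$; here I would follow Hamilton's strategy, using the monotonicity of Perelman's $\bar{\lambda}$-invariant to compare the asymptotic volume of $g(t)/t$ to $\vol(g_\hyp)$ and ruling out a compressing disk by a minimal-surface/volume argument. Second, the thin part $G$ with torus boundary and collapsed geometry must be identified as a graph manifold, which requires a collapsing theorem for $3$-\var s with locally controlled curvature. Granting these two ingredients, one concludes: if $G$ is empty, $M$ is hyperbolic; if $H'$ is empty, then $M$ is a closed \irr\ graph manifold, which by the classical Seifert--graph-manifold analysis is either Seifert fibered or contains an \incomp\ torus; and if both pieces are nonempty, a torus of $\bord H'$ is \incomp\ in $M$, completing the trichotomy.
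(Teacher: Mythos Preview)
Your outline is a correct sketch of Perelman's original strategy, but this paper deliberately departs from it at precisely the two ``main obstacles'' you single out, so the routes are genuinely different.

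\medskip
\textbf{Incompressibility of the boundary tori.} You invoke Hamilton's minimal-surface argument together with Perelman's $\bar\lambda$-monotonicity. The paper does neither. It works instead with the scale-invariant quantity $\hat R(g)=\Rmin(g)\,\vol(g)^{2/3}$ and an auxiliary invariant $V_0(M)$, the minimum volume of a hyperbolic manifold $H$ from which $M$ can be obtained by Dehn filling (this minimum exists by the well-ordering of hyperbolic volumes). The argument is then: if the cuspidal tori bounding the thick pieces are all compressible, a topological lemma shows either that $M$ is a graph manifold, or that $M$ is a Dehn filling of some hyperbolic limit $H$ with $\hat R(H)\ge \hat R(g_0)$. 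The latter alternative is ruled out \emph{not} by analysis along the flow but by a clever choice of the \emph{initial} metric: Anderson's cusp-closing construction produces $g_0$ with $\hat R(g_0)>\hat R(H_0)$ where $H_0$ realizes $V_0(M)$, which contradicts $\hat R(H)\ge\hat R(g_0)$ since $\hat R(H)\le\hat R(H_0)$. Thus the paper replaces a hard minimal-surface-in-flow argument by a soft monotonicity/volume-comparison trick plus freedom in the initial data.

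\medskip
\textbf{The collapsed thin part.} You appeal to a collapsing theorem identifying $G$ as a graph manifold. The paper's treatment is again different: it uses Gromov's simplicial volume. Via Cheeger--Gromov local models one produces a Haken submanifold $X\subset M$ whose complement has virtually abelian $\pi_1$, shows every Dehn filling of $X$ has zero simplicial volume, and then invokes Thurston's hyperbolic Dehn filling plus classical $3$-manifold topology to conclude $X$, and hence $M$, is a graph manifold. This bypasses Alexandrov-space collapsing theory entirely.

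\medskip
A smaller difference: the paper works with \emph{weak solutions} (surgery performed \emph{before} the curvature blows up, on a fixed manifold) rather than Ricci flow with $\delta$-cutoff at the singular time; this is what makes your ``trivial $S^2$ splittings'' step unnecessary here.

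\medskip
What each approach buys: yours (Perelman's) is conceptually straight-line but needs the delicate minimal-surface and collapsing analyses inside a surgically modified flow; the paper's trades that analytic work for elementary $\hat R$-monotonicity, a topological Dehn-filling lemma, and the well-ordering of hyperbolic volumes, at the cost of having to pick the initial metric judiciously rather than arbitrarily.
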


Theorem~\ref{geometrisation}, together with the Kneser-Milnor prime
de\-com\-po\-si\-tion theo\-rem (see e.g.~\cite{hempel},) the torus splitting theorem
of Jaco-Shalen~\cite{js:seifert} and Johannson~\cite{joh:hom}, and
Thurston's hyperbolization theorem for Haken manifolds~\cite{otal:fibered,otal:haken,kap:hyp},
implies Thurston's original formulation of the Geo\-me\-tri\-za\-tion
Conjecture in terms
of a canonical decomposition of any compact $3$-\var\ along spheres and tori into
`geometric' $3$-\var s admitting locally homogeneous riemannian
metrics.\footnote{Of the famous `eight geometries' that uniformize those locally
homogeneous $3$-manifolds,
six correspond to Seifert manifolds (including spherical geometry), one is hyperbolic
geometry, and the last one, $\mathbf{Sol}$, does not appear in the above statement.
(From our viewpoint, $\mathbf{Sol}$ \var s are just special $3$-\var s containing \incomp\ tori.)}

Theorem~\ref{geometrisation} implies the Poincar\'e Conjecture: indeed, by Kneser's theorem and
van Kampen's theorem,
it is enough to prove this conjecture for \irr\ $3$-\var s. Now if $\pi_1M$ is trivial, then $M$
cannot be hyperbolic or contain an incompressible torus. Hence $M$ is Seifert. As mentioned earlier, Seifert manifolds with finite fundamental group are spherical. It follows that $M$ is
a quotient of the $3$-sphere by a trivial group, i.e.~the $3$-sphere itself.

By a straightforward extension of the above argument, one can deduce
from Theorem~\ref{geometrisation} the following strengthening of the Poincar\'e Conjecture: 

\bigskip
{\bf Elliptization Conjecture} Every closed $3$-\var\ with finite fundamental
group is spherical. 
\bigskip

When $\pi_1M$ is infinite, it was known from~\cite{scott:fake,mess:seifert,tukia:hom,gabai:conv,
cj:conv} (see also~\cite{maillot:center} and~\cite[Chapter 5]{bmp})
that if $\pi_1 M$
has a subgroup isomorphic to $\Zz^2$, then it is Seifert fibered or contains an embedded
\incomp\ torus. Thus, prior to Perelman's work,  the only remaining open question was the following:

\bigskip
{\bf Hyperbolization Conjecture} If $M$ is a closed, irreducible $3$-\var\ whose fundamental
group is infinite and does not contain a subgroup isomorphic to $\Zz^2$, then $M$ is hyperbolic.
\bigskip

This statement is also a direct consequence of Theorem~\ref{geometrisation}.

The dichotomy finite vs infinite fundamental group will not appear directly in the proof
presented here, but rather via properties of the higher homotopy groups.
The connection is given by  the following well-known lemma:
\begin{lem}\label{lem:classic}
Let $M$ be a closed, \irr\ $3$-\var. Then the following are equivalent:
\begin{enumerate}
\item $\pi_1M$ is infinite;
\item $\pi_3M$ is trivial;
\item $M$ is \bydef{aspherical}, i.e.~$\pi_k M$ is trivial for all $k\ge 2$.
\end{enumerate}
\end{lem}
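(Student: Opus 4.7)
The plan is to base everything on Papakyriakopoulos's Sphere Theorem together with standard obstruction-theoretic arguments applied to the universal cover $\tilde M$. The implication (iii)$\Rightarrow$(ii) is immediate from the definition of asphericity, so the real content is the pair (i)$\Rightarrow$(iii) and (ii)$\Rightarrow$(i). I will prove both by analysing $\tilde M$.

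First I would establish the preliminary fact that $\pi_2 M=0$. By the Sphere Theorem, if $\pi_2 M$ were nonzero there would exist an embedded 2-sphere in $M$ that is not null-homotopic; but since $M$ is irreducible, every embedded 2-sphere bounds a 3-ball and is therefore null-homotopic, a contradiction. Because the covering projection $\tilde M\to M$ induces isomorphisms on $\pi_k$ for $k\ge 2$, this also gives $\pi_2\tilde M=0$, and of course $\pi_1\tilde M=0$.

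For (i)$\Rightarrow$(iii), assume $\pi_1 M$ is infinite, so that $\tilde M$ is a non\-compact orientable 3-manifold. Any such manifold has the homotopy type of a 2-dimensional CW complex, so $H_k(\tilde M;\Zz)=0$ for $k\ge 3$. Combined with $\pi_1\tilde M=\pi_2\tilde M=0$ and the Hurewicz theorem, which gives $H_1(\tilde M)=H_2(\tilde M)=0$, we conclude that $\tilde M$ has the integral homology of a point. Since it is simply connected, Whitehead's theorem implies $\tilde M$ is contractible. Hence $\pi_k M=\pi_k\tilde M=0$ for every $k\ge 2$, i.e.~$M$ is aspherical.

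For (ii)$\Rightarrow$(i), I argue by contraposition: assume $\pi_1 M$ is finite. Then $\tilde M$ is a closed orientable 3-manifold with $\pi_1\tilde M=\pi_2\tilde M=0$. By the Hurewicz theorem applied in degree 3, $\pi_3\tilde M\cong H_3(\tilde M;\Zz)\cong\Zz$, which is nontrivial. Since $\pi_3 M=\pi_3\tilde M$, condition (ii) fails. This closes the loop (i)$\Rightarrow$(iii)$\Rightarrow$(ii)$\Rightarrow$(i).

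The main conceptual input is the Sphere Theorem, which is what allows irreducibility to be promoted from a statement about embedded spheres to the homotopical statement $\pi_2 M=0$; once this is in hand, the rest is bookkeeping with Hurewicz, Whitehead, and the fact that a noncompact $n$-manifold has trivial top-dimensional integral homology. I would expect the only subtlety worth flagging in the write-up to be the justification that a non\-compact 3-manifold is homotopy equivalent to a 2-complex (alternatively one can invoke Poincaré--Lefschetz duality with compact supports to get $H_3(\tilde M;\Zz)=0$ directly).
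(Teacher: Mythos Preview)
Your proof is correct and follows essentially the same route as the paper: both use the Sphere Theorem to get $\pi_2 M=0$, pass to the universal cover $\tilde M$, and then exploit Hurewicz together with the fact that $H_3(\tilde M)$ vanishes exactly when $\tilde M$ is noncompact. The only cosmetic difference is organizational---the paper first proves (i)$\Leftrightarrow$(ii) and then (ii)$\Rightarrow$(iii) by iterated Hurewicz, whereas you go (i)$\Rightarrow$(iii)$\Rightarrow$(ii)$\Rightarrow$(i) and package the inductive Hurewicz step as ``simply connected with trivial homology implies contractible''; the ingredients are identical.
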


\begin{proof}
It follows from irreducibility of $M$ and the Sphere Theorem that $\pi_2M$ is trivial.
Let $\tilde M$ be the universal cover of $M$. Then $\tilde M$ is $2$-connected.
By the Hurewicz Isomorphism theorem, we have $\pi_3\tilde M\cong H_3 \tilde M$.
Hence $\pi_3 M$, which is isomorphic to $\pi_3 \tilde M$, vanishes if and only if $\tilde M$
is noncompact. This proves that (i) and (ii) are equivalent.

It is immediate that (iii) implies (ii). For the converse, apply the Hurewicz theorem inductively
to $\tilde M$.
\end{proof}

\section{The Ricci Flow approach}\label{sec:ricci}

\begin{nota}
If $g$ is a riemannian metric, we denote by $\Rmin(g)$
the minimum of its scalar curvature,
by $\Ric_g$ its Ricci tensor, and by $\vol(g)$ its volume.
\end{nota}

\medskip
Let $M$ be a closed, irreducible $3$-manifold. In the Ricci flow approach to
geometrization, one studies solutions of the evolution equation
\begin{equation}\label{rf}
\frac{dg}{dt} = -2 \Ric_{g(t)},
\end{equation}
called the \bydef{Ricci flow equation}, which was introduced by R.~Hamilton. A solution
is an \bydef{evolving metric} $\{g(t)\}_{t\in I}$, i.e.~a $1$-parameter family of riemannian metrics on $M$
defined on an interval $I\subset\Rr$.
In~\cite{hamilton:three}, Hamilton proved that for any metric $g_0$ on $M$, there exists $\epsi>0$ such that Equation~(\ref{rf}) has a unique solution defined
on $[0,\epsi)$ with initial condition $g(0)=g_0$. Thus there exists $T\in (0,+\infty]$ such
that  $[0,T)$ is the maximal interval where
the solution to~(\ref{rf}) with initial condition $g_0$ is defined. When $T$ is finite,
one says that Ricci flow has a \bydef{singularity} at time $T$. Ideally, one would like to
see the geometry of $M$ appear by looking at the metric $g(t)$ when $t$ tends to $T$
(whether $T$ be finite or infinite.)
To understand how this works, we first consider some (very) simple examples, where
the initial metric is locally homogeneous.

\begin{example}
If $g_0$ has constant sectional curvature $K$, then the solution is given by
$g(t)=(1-4Kt) g_0$. Thus in the spherical case, where $K>0$, we have $T<\infty$, and as $t$
goes to $T$, the manifold shrinks to a point while remaining of constant positive curvature.

By contrast, in the hyperbolic case, where $K<0$, we have $T=\infty$ and $g(t)$ expands
indefinitely, while remaining of constant negative curvature. In this case,  the \emph{hyperbolically rescaled} solution $\tilde g(t):= (4t)^{-1} g(t)$ converges to the
metric of constant sectional curvature $-1$.
\end{example}
 
\begin{example}
If $M$ is the product of a circle with a surface of genus at least $2$, and $g_0$ is
a product metric whose second factor has constant curvature, then $T=\infty$; moreover, the
metric is constant on the first factor, and expanding on the second factor.

In this case, $g(t)$ does not have any convergent rescaling. However, one can
observe that the hyperbolically rescaled solution $\tilde g(t)$ defined above \bydef{collapses with bounded curvature}, i.e.~has bounded curvature and injectivity radius
going to $0$ everywhere as $t$ goes to $+\infty$. Manifolds admitting collapsing sequences
of riemannian metrics under various curvature bounds have been studied by several authors,
starting with the seminal work of Cheeger-Gromov~\cite{Che-Gro1,Che-Gro2} (see also the surveys~\cite{pansu:effondrement,fukaya:collapsing}, as well as the recent paper~\cite{sy} and the references
therein.)
Thus one is led to expect that these techniques can be used to deduce topological
information on $M$ from the large-scale behavior of $\tilde g(t)$ in this case.

One can give similar formulae for other locally homogeneous Ricci flows (see e.g.~\cite{lott:long}.)
\end{example}

An (overly) optimistic program based on the previous examples would go as follows:
if $\pi_1M$ is finite, prove that any Ricci flow solution has a singularity at some finite time $T$, and
that the spherical metric can be obtained as a limit of rescalings of $g(t)$ when $t$ goes to $T$.
If $\pi_1M$ is infinite, show that Ricci flow is defined for all times, and study the long time
behavior of the
hyperbolically rescaled solution $\tilde g(t):= (4t)^{-1} g(t)$ in order to recognize the
topological type of $M$; for instance, if $M$ has a hyperbolic metric, then this metric ought to
appear as the limit of $\tilde g(t)$ as $t$ tends to infinity.

Several important results were obtained by
Hamilton~\cite{hamilton:three,hamilton:four,hamilton:nonsingular} in this direction,
among which we state just two. When $g_0$ has positive Ricci curvature,
then $T<\infty$, and the volume-rescaled Ricci flow $\vol(g(t))^{-2/3} g(t)$ converges to a round metric
as $t\to T$.  If 
$T=\infty$ and $\tilde g(t)$
has uniformly bounded sectional curvature, then it  converges or collapses, or $M$
contains an incompressible torus.\footnote{Hamilton's original results were formulated in terms
of normalized Ricci flow. We have restated them so that they fit better in our discussion.}

The general case, however, is more difficult, because it sometimes happens that $T<\infty$ while
the behavior of $g(t)$ as $t$ tends to $T$ does not allow to determine the
topology of $M$. One possibility  is the so-called \bydef{neck pinch},
where part of $M$ looks like a thinner and thinner cylindrical neck as one approaches
the singularity. This can happen
even if $M$ is irreducible (see~\cite{ak:neck} for an example where $M=S^3$); thus
neck pinches may not give any useful information on the topology of $M$.
See~\cite{hamilton:singular} and~\cite[Chapters 8 and 9]{Cho-Kno} for a discussion of what was known on formation of singularities
prior to Perelman's work.

A solution to this problem was found by G.~Perelman, inspired by ideas of
Hamilton~\cite{hamilton:isotropic}. In~\cite{Per2},  Perelman
explains how to construct a kind of  generalized solution to the Ricci flow equation, which he
calls \emph{Ricci flow with $\delta$-cutoff}. This type of solution exists for all time unless it leads to
a metric that is sufficiently controlled so as to allow one to recognize the topology of the manifold.
Several slightly different ways to make this construction precise are given in~\cite{Kle-Lot,Mor-Tia,Cao-Zhu}. We shall work with closely related objects which we call \emph{weak solutions}
of the Ricci flow equation.

The first part of the proof of Theorem~\ref{geometrisation} is to prove an
existence theorem for weak solutions. The proof then splits into two cases.

If $\pi_1M$ is finite, then by Lemma~\ref{lem:classic}, we have $\pi_3M\neq 0$. Following~\cite{cm:question},
we use this fact
to associate to any riemannian metric $g$ a quantity called its \emph{width}  $W(g)$. This
invariant can be studied via minimal surface theory. In particular, one can control the way it
varies with time in a weak solution $\{g(t)\}$. This
is used to prove that for some finite time $t_0$, the metric $g(t_0)$ belongs to a special
class of metrics, called \emph{locally canonical} metrics. This permits to recognize the
topology of $M$.

If $\pi_1M$ is infinite, then we need to refine the existence theorem for weak solutions.
We prove that such solutions exist for all time, and enjoy additional properties.
Then we study  the long time behavior of the corresponding hyperbolically rescaled solutions to deduce
topological consequences.

In Section~\ref{sec:poincare}, we give a precise definition of weak solutions, state
an existence result, and explain how to deduce the Elliptization Conjecture. The case
where $\pi_1M$ is infinite is tackled in Section~\ref{sec:hyper}. In Section~\ref{sec:solution},
we discuss in more detail the construction of weak solutions.

\section{Canonical neighborhoods, weak solutions and elliptization}
\label{sec:poincare}

\subsection{Canonical neighborhoods}
Let $\epsi$ be a positive number. The \bydef{standard} $\epsi$-\bydef{neck}
is the riemannian product $N_\epsi:=S^2\times (-\epsi^{-1},\epsi^{-1})$,
where the $S^2$ factor is round of scalar curvature $1$. Its metric is denoted by $g^\epsi_0$.

\begin{defi}\label{def:cn}
Let $\epsi$ be a positive number, 
$(M,g)$ be a riemannian $3$-\var\ and $x$ be a point of $M$. A neighborhood $U\subset M$ of
$x$ is called a \bydef{weak} $\epsi$-\bydef{canonical neighborhood} if one of the
following holds:
\begin{enumerate}
\item There exist a number $\lambda>0$ and a
$\mathcal C^{[\epsi^{-1}]+1}$-diffeomorphism $f:
(N_\epsi,*) \to (U,x)$ (where $N_\epsi$ is the standard $\epsi$-neck, and $*$ is a
basepoint in $S^2\times \{0\}$) such that the $\mathcal C^{[\epsi^{-1}]+1}$-norm of the tensor
$\phi^* (\lambda g) - g^\epsi_0$ is less than $\epsi$ everywhere. 
In this case, we say that $U$ is an $\epsi$-\bydef{neck} centered at $x$.
\item $U$ is the union of two sets $V,W$ such that $x\in V$, $V$ is
a closed topological $3$-ball,  $\bar W \cap V=\bord V$, and $W$ is an $\epsi$-neck. Such
a $U$ is called an $\epsi$-\bydef{cap} centered at $x$.
\item $U$ is a spherical $3$-\var\ (hence $U=M$ since $M$ is connected).
\end{enumerate}
\end{defi}

This notion is useful for topological purposes because of the following result (cf.~\cite[Proposition A.25]{Mor-Tia}):

\begin{theo}\label{reconnait topo}
Let $(M,g)$ be a closed riemannian $3$-manifold. If every point of $(M,g)$ has a weak
${10}^{-2}$-canonical neighborhood, then $M$ is spherical or diffeomorphic to
$S^2\times S^1$.
\end{theo}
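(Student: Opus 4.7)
The plan is to do a case analysis on the type of canonical neighborhood guaranteed by Definition~\ref{def:cn}. If some point $x_0\in M$ has a weak canonical neighborhood of type (iii), then this neighborhood equals $M$ and is a spherical space form, so $M$ is spherical and we are done. Hence we may assume that every point of $M$ is contained in an $\epsi$-neck or an $\epsi$-cap, with $\epsi=10^{-2}$.

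Under this assumption, my strategy is to foliate $M$ (outside the ``tips'' of caps) by $2$-spheres and read off the topology from the combinatorics of the foliation. To each $\epsi$-neck $U\subset M$, associate its \emph{middle sphere} $\Sigma_U\subset U$, namely the image under the approximating diffeomorphism $f\colon N_\epsi\to U$ of the slice $S^2\times\{0\}$. The choice $\epsi=10^{-2}$ is small enough that the following two stability facts should hold: first, if two $\epsi$-necks $U,U'$ overlap, then $\Sigma_U$ and $\Sigma_{U'}$ are either disjoint and cobound a region in $U\cup U'$ diffeomorphic to $S^2\times[0,1]$, or else they are isotopic through such spheres; second, in an $\epsi$-cap $U=V\cup W$, the boundary $2$-sphere $\bord V$ is isotopic inside $W$ to the middle sphere $\Sigma_W$, so that $V$ may be regarded as a $3$-ball capping off one end of the neck $W$. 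Both facts reduce to the elementary model statement that an embedded $2$-sphere in $N_\epsi$ which is $\mathcal{C}^1$-close to a cross-section is itself isotopic to a cross-section.

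With these alignments, I would cover $M$ by a finite collection of necks and caps (using compactness), then extract a maximal family of pairwise disjoint middle spheres $\Sigma_1,\ldots,\Sigma_k$ such that each component of $M\setminus(\Sigma_1\cup\cdots\cup\Sigma_k)$ is either (a) a cylindrical piece diffeomorphic to $S^2\times(0,1)$ cobounded by two of the $\Sigma_i$'s, or (b) an open $3$-ball, namely the interior of the core $V$ of some $\epsi$-cap. A combinatorial ``chain of necks'' argument on the adjacency graph of the $\Sigma_i$'s then leaves only two possibilities: either every complementary piece is of type (a), so $M$ is a cyclic concatenation of copies of $S^2\times[0,1]$ and is therefore diffeomorphic to $S^2\times S^1$; or exactly two complementary pieces are of type (b) and the rest are cylindrical, in which case $M$ is obtained by gluing two $3$-balls along their common boundary $2$-sphere and is therefore diffeomorphic to $S^3$, which is spherical.

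The main technical obstacle, and the place where the smallness of $\epsi$ genuinely enters, is the alignment step in the second paragraph above, namely verifying that middle spheres from different necks and caps can be coherently matched into a single $1$-dimensional chain. One must exclude, for instance, the configuration where two neck structures on overlapping sets have nearly transverse cross-sections, and show that near a cap the adjacent neck $W$ is oriented so that its middle sphere is parallel, not oblique, to $\bord V$. Once these geometric matchings are in place, compactness of $M$ makes the chain finite and the topological identification of $M$ as $S^2\times S^1$ or $S^3$ is then purely combinatorial.
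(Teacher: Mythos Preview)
Your proposal is correct and follows essentially the same approach as the paper's sketch: dispose of type~(iii), then use the neck/cap structure to build a chain of $2$-spheres whose combinatorics forces $M$ to be either a circular $S^2$-bundle (hence $S^2\times S^1$, using orientability) or two caps joined by a chain of necks (hence $S^3$). You have also correctly identified the one genuine technical point---the alignment of middle spheres of overlapping necks and caps---which the paper leaves implicit in the phrase ``piecing together those necks''.
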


\begin{proof}[Sketch of proof]
If some point has
a neighborhood of the third type, there is nothing to prove. Otherwise, there are two cases: if
all points are centers of necks, then by piecing together those necks, one obtains a
fibration of $M$ by $2$-spheres. Since $M$ is orientable, it must be diffeomorphic
to $S^2\times S^1$. If some point is the center of a cap, then one shows that $M$ is
either the union of two caps, or the union of two caps and a chain of necks connecting
them. In either case, $M$ is diffeomorphic to $S^3$.
\end{proof}

Theorem~\ref{reconnait topo} motivates the following definition:
\begin{defi}
A riemannian metric $g$ on a $3$-\var\ $M$ is \bydef{locally canonical} if
every point of $(M,g)$ has a weak ${10}^{-2}$-canonical neighborhood.
\end{defi}

\begin{rem}(*)
Definition~\ref{def:cn} is weaker than Perelman's in several respects. Perelman's definition includes additional
geometric information such as estimates on the curvature or its derivatives, which are not needed
for Theorem~\ref{reconnait topo}. He also considers caps  that are diffeomorphic to the punctured real projective
$3$-space instead of the $3$-ball. We do not need to do this because we shall soon restrict
attention to $3$-\var s without embedding projective planes.

Case (iii) of Definition~\ref{def:cn} may appear somewhat artificial. A more natural definition would include some
geometric information related to the formation of singularities of the Ricci flow. However, this requires
splitting case~(iii) into two subcases, which creates complications irrelevant to the topological applications.
\end{rem}

\subsection{Weak solutions}
Until the end of this section, we suppose that $M$ is closed and  \irr. We also assume that $M$ is
$RP^2$-\bydef{free}, i.e.~does not contain
any \svar\ diffeomorphic to $RP^2$. This is not much of  a restriction because the
only closed,  \irr\ $3$-manifold that does contain an embedded copy of $RP^2$ is $RP^3$, which is
a spherical \var.

The next goal is to give the formal definition of weak solutions and state an existence
theorem. To motivate this, we first give a very broad outline of the construction, which
will be developped in Section~\ref{sec:solution}.

As we already explained, one of the main difficulties in the Ricci flow approach to geometrization
is that singularities unrelated to the topology of $M$ may appear. Using maximum principle
arguments, one shows that singularities in a $3$-dimensional Ricci flow can only occur when the scalar curvature tends to $+\infty$ somewhere. One of Perelman's major breakthroughs was to
give a precise local description of the geometry at points of large scalar curvature. In particular, he
showed that those points have weak canonical neighborhoods.

To solve the problem of singularities, we fix a large number $\Theta$, which plays
the role of a curvature threshold. As long as the maximum of the scalar
curvature is less than $\Theta$, Ricci flow is defined. If it reaches $\Theta$ at some time
$t_0$, then there are two
possibilities: if the minimum of the scalar curvature of the time-$t_0$ metric is large enough, then we shall see that this
metric is locally canonical, so that Theorem~\ref{reconnait topo} enables us to recognize
the topology of $M$.

Otherwise, we modify $g_0$  so that the maximum of the
scalar curvature of the new metric, denoted by $g_+(t_0)$, is at most $\Theta/2$. This modification is called \bydef{metric surgery}.
It consists in replacing some $\epsi$-caps by a special type of $\epsi$-caps called
\emph{almost standard caps}. Then we start the Ricci flow again, using $g_+(t_0)$ as new initial metric.
This procedure is repeated as many times as necessary. The main difficulty is to choose
$\Theta$ and do metric surgery in such a way that the construction can indeed by iterated.

We now come to the formal definitions. Let $M$ be a $3$-\var.

An \bydef{evolving metric} on
$M$ defined on an interval $[a,b]$ is a map $t\mapsto g(t)$ from $[a,b]$ to the
space of smooth riemannian metrics on $M$. It is \bydef{piecewise $\mathcal C^1$} if
there is a finite subdivision $a=t_0<t_1<\cdots < t_p=b$ such that for each $0\le i<p$,
there exists a metric $g_+(t_i)$ on $M$ such that the map $t\mapsto \bar g(t)$ defined
on $[t_i,t_{i+1}]$ by setting $\bar g(t_i):= g_+(t_i)$ and $\bar g(t):=g(t)$ if $t>t_i$ is
$\mathcal C^1$-smooth.

We often view an evolving metric $g(\cdot)$ as a $1$-parameter family of metrics indexed by
the interval $[a,b]$; thus we use the notation $\{g(t)\}_{t\in [a,b]}$.
For $t\in [a,b]$, we say that $t$ is \bydef{regular} if $g(\cdot)$ is $\mathcal C^1$-smooth
in a neighborhood of $t$. Otherwise $t$ is called \bydef{singular}. By definition, the set
of singular times is finite. If $t\in (a,b)$ is a singular time, then it follows from the
definition that the map $g(\cdot)$ is continuous from the left at $t$, and has a  limit from the right,
denoted by $g_+(t)$.

There are similar definitions where the domain of definition $[a,b]$ is replaced by an open or
a  half-open interval $I$. When $I$ has infinite length, the set of singular times is a discrete subset
of the real line, but it may be infinite.

\begin{defi}\label{def:ws}
Let $I\subset \Rr$ be an interval. A piecewise $\mathcal C^1$ evolving metric $t\mapsto g(t)$
on $M$ defined on $I$ is said to be a \bydef{weak solution} of the Ricci flow equation~(\ref{rf})
if
\begin{enumerate}
\item Equation~(\ref{rf}) is satisfied at all regular times.
\item For every singular time $t\in I$ one has
     \begin{enumerate}
        \item $\Rmin(g_+(t)) \ge \Rmin(g(t))$, and
      \item $g_+(t) \le g(t)$.
     \end{enumerate}
\end{enumerate}
\end{defi}

\medskip
We now state the main technical result on weak solutions needed to prove the Elliptization Conjecture
(cf.~\cite[Proposition 5.1]{Per2}):
\begin{theo}[Existence of weak solutions]\label{existence 1}
Let $M$ be a closed, \irr, $RP^2$-free $3$-manifold.
For every $T>0$ and every riemannian metric $g_0$ on $M$, there exists $T'\in (0,T]$ and
a weak solution $\{g(t)\}$ on $M$, defined on $[0,T']$, with initial condition $g(0)=g_0$, and such
that either
\begin{enumerate}
\item $T'=T$, or
\item $T'<T$ and $g(T')$ is locally canonical.
\end{enumerate}
\end{theo}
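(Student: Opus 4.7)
The plan is to construct $g(\cdot)$ by alternating classical Ricci flow with metric surgery steps, stopping at the first moment we either reach time $T$ or find that the current metric is locally canonical. Fix once and for all a curvature threshold $\Theta$, large compared to $\Rmax(g_0)$, together with a scale $r<\Theta$ at which Perelman's canonical neighborhood theorem is available: every point of a Ricci flow built from $g_0$ with $R\ge r$ will admit a weak $10^{-2}$-canonical neighborhood of type (i) or (ii) of Definition~\ref{def:cn}.

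First I would run Hamilton's short-time existence from $g(0)=g_0$ on a maximal smooth interval $[0,T_1)$. If $T_1\ge T$, restrict to $[0,T]$ and take $T':=T$ to land in case (i). Otherwise $\Rmax(g(t))\to\infty$ as $t\uparrow T_1$, so let $t_0\in[0,T_1)$ be the first time at which $\Rmax(g(t_0))=\Theta$. I then examine $\Rmin(g(t_0))$: if $\Rmin(g(t_0))\ge r$, every point of $(M,g(t_0))$ has a weak $10^{-2}$-canonical neighborhood, so $g(t_0)$ is locally canonical and we stop with $T':=t_0$, realizing case (ii).

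If instead $\Rmin(g(t_0))<r$, I would perform metric surgery. The high-curvature region $\Omega:=\{R(\cdot,t_0)\ge r\}$ is covered by $\epsi$-necks and $\epsi$-caps, and I would select a finite disjoint family of $\epsi$-necks close to $\bord\Omega$ to serve as surgery loci. Cut along their central $2$-spheres, discard the components lying on the high-curvature side (which are $3$-balls, using irreducibility and the $RP^2$-free hypothesis on $M$ to preserve its topology), and glue back almost standard caps carrying an explicitly prescribed metric that interpolates smoothly with the neck geometry. The resulting metric $g_+(t_0)$ is designed so that $g_+(t_0)\le g(t_0)$ (the cap sits inside the rescaled neck), $\Rmin(g_+(t_0))\ge\Rmin(g(t_0))$ (by the explicit choice of the standard cap metric), and $\Rmax(g_+(t_0))\le\Theta/2$. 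These are precisely the inequalities required at a singular time by Definition~\ref{def:ws}.

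I would then iterate: restart Ricci flow from $g_+(t_0)$ at $t_0$ and repeat the analysis at the next time the maximal scalar curvature reaches $\Theta$. Because the post-surgery curvature is at most $\Theta/2$, standard parabolic estimates give a positive lower bound on the time needed for it to double back to $\Theta$, so only finitely many surgeries can occur before either $T$ is reached or some intermediate metric is declared locally canonical, at which point the construction terminates with a well-defined $T'\in(0,T]$. The hard part is the surgery step: one must choose $\Theta$, $r$, the neck precision $\epsi$, and a surgery parameter $\delta$ in a mutually consistent way so that, after the caps have been glued in, the next smooth flow interval still satisfies the geometric bounds needed to reapply Perelman's canonical neighborhood theorem. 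This coupling of the cutoff construction with the a priori estimates is the technical heart of the matter and is deferred to Section~\ref{sec:solution}.
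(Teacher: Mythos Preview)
Your outline is essentially the paper's own argument in Section~\ref{sec:solution}: iterate Ricci flow and metric surgery, stop the first time either $T$ is reached or $\Rmin$ exceeds the canonical-neighborhood scale, with each surgery dropping $\Rmax$ below $\Theta/2$ and a uniform lower bound on the time between surgeries. The only refinements worth flagging are that the paper locates its cutting spheres at a specific intermediate curvature scale $h^{-2}$ via the cutoff parameters Theorem~\ref{cutoff} and insists they be centers of $\delta$-necks with $\delta\ll\epsi$ (not merely $\epsi$-necks near $\bord\Omega$), and that non-accumulation of surgeries is drawn from the derivative estimate $\partial R/\partial t<CR^2$ supplied by the canonical neighborhood property rather than from generic parabolic doubling-time bounds.
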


\bigskip

\begin{rem}
The statement of Theorem~\ref{existence 1} is slightly weaker than one might expect, since in case~(i) it
is not claimed that the solution exists for all time. A stronger statement is in fact true (cf.~Theorem~\ref{existence 2} below), but its proof is more involved.
\end{rem}

\begin{rem}(*)
There are a few differences between weak solutions and Perelman's Ricci flow with $\delta$-cutoff.
One obvious difference is that a weak solution is an evolving metric on a fixed manifold
rather than an evolving manifold. This simplification is made possible by the extra topological
assumptions on $M$.

Another, perhaps more significant difference is that surgery occurs \emph{before} the Ricci flow
becomes singular, rather than \emph{at} the singular time. Our construction is in this respect closer
in spirit to the surgery process envisioned by Hamilton~\cite{hamilton:isotropic}.
This point is discussed in more detail in Section~\ref{sec:solution}.
\end{rem}

\subsection{Proof of elliptization}

To deduce the elliptization part of Theorem~\ref{geometrisation}, all we need is the following result (cf.~\cite{Per3,cm:question}:)
\begin{theo}\label{extinction}
Let $M$ be a closed, \irr, $RP^2$-free $3$-manifold with finite fundamental group.
For each riemannian metric $g_0$ on
$M$ there is a number $T(g_0)>0$ such that if $\{g(t)\}$ on $M$ is a
weak solution defined on some interval $[0,T]$ and with initial condition $g(0)=g_0$,
then $T<T(g_0)$.
\end{theo}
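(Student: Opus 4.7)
The plan is to adapt the Colding--Minicozzi/Perelman width argument (\cite{cm:question,Per3}) to the weak-solution setting. Since $\pi_1M$ is finite, Lemma~\ref{lem:classic} gives $\pi_3M\neq 0$; fix once and for all a nontrivial class $\alpha\in\pi_3(M)$. Viewing $S^3$ as the family of level spheres $S^2\times\{s\}$, $s\in[-1,1]$, with $s=\pm 1$ collapsed to points, a representative of $\alpha$ amounts to a continuous \emph{sweepout} $\{\sigma_s:S^2\to M\}_{s\in[-1,1]}$ with $\sigma_{\pm 1}$ constant. Define the \emph{width}
$$W(g):=\inf\ \sup_{s\in[-1,1]} \mathrm{Area}_g(\sigma_s),$$
where the infimum runs over all sweepouts representing $\alpha$. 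This is a nonnegative real number depending only on $g$ and the fixed class $\alpha$; in particular $W(g_0)<\infty$.

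I would rely on two ingredients. (a) \textbf{Colding--Minicozzi derivative estimate}: on any regular interval of the weak solution,
$$\frac{d^+}{dt}W(g(t)) \le -4\pi - \tfrac12\,\Rmin(g(t))\,W(g(t)).$$
This is proved by choosing an almost-minimax sweepout, pulling its top slice tight to a (branched) minimal $2$-sphere $\Sigma$, computing $\tfrac{d}{dt}\mathrm{Area}_g(\Sigma)=-\!\int_\Sigma\Ric(n,n)\,dA$ along~(\ref{rf}), and rewriting via the Gauss equation and Gauss--Bonnet (using $\chi(S^2)=2$); discarding the nonnegative second-fundamental-form term and using $R\ge\Rmin$ pointwise yields the inequality. (b) \textbf{Monotonicity across surgery}: the condition $g_+(t)\le g(t)$ in Definition~\ref{def:ws} immediately gives $\mathrm{Area}_{g_+(t)}(\sigma)\le\mathrm{Area}_{g(t)}(\sigma)$ for every map $\sigma:S^2\to M$, hence $W(g_+(t))\le W(g(t))$. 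Combining (a) with the downward jumps from (b) yields the integrated inequality
$$W(g(b))-W(g(a)) \le \int_a^b\!\bigl(-4\pi-\tfrac12\Rmin(g(s))\,W(g(s))\bigr)\,ds$$
for every $0\le a\le b$ in the domain of the weak solution.

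To close the argument I would use the lower bound $\Rmin(g(t))\ge -3/(2(t+C_0))$, where $C_0>0$ depends only on $\Rmin(g_0)$: on regular pieces this follows from the standard maximum-principle ODE $\tfrac{d}{dt}\Rmin\ge \tfrac23\Rmin^2$, and the surgery condition $\Rmin(g_+(t))\ge\Rmin(g(t))$ glues these pieces together. Setting $u(t):=(t+C_0)^{-3/4}W(g(t))$ and substituting this bound into the inequality above gives $u'(t)\le -4\pi(t+C_0)^{-3/4}$ in the integrated sense, hence
$$W(g(T)) \le (T+C_0)^{3/4}\bigl(C_0^{-3/4}W(g_0)+16\pi C_0^{1/4}\bigr)-16\pi(T+C_0).$$
The right-hand side is strictly negative once $T$ exceeds an explicit threshold $T(g_0)$ depending only on $W(g_0)$ and $\Rmin(g_0)$; since $W\ge 0$, no weak solution with initial condition $g_0$ can be defined on $[0,T]$ for $T\ge T(g_0)$.

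The main obstacle will be ingredient (a). Making the pull-tight procedure rigorous requires the Colding--Minicozzi minimax/bubbling analysis for sweepouts of $M$ by $2$-spheres: one has to show that near-maximal slices of an almost-minimax sweepout converge (in the bubble-tree sense) to branched minimal $2$-spheres, and one has to choose the test sweepout carefully so that bubble formation does not inflate the areas in a way that would defeat the Ricci-flow derivative computation. By contrast, ingredient (b) is immediate from Definition~\ref{def:ws}, the $\Rmin$ control is the usual Hamilton maximum-principle bound (plus a trivial observation at singular times), and the final integration is a routine Gronwall-type computation.
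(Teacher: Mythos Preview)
Your argument is correct and follows essentially the same route as the paper: define the Colding--Minicozzi width via $\pi_3M\neq 0$, use the derivative inequality $\frac{d}{dt}W\le -4\pi+\frac{3}{4(t+C)}W$ on regular intervals (your intermediate step $-4\pi-\tfrac12\Rmin W$ together with the $\Rmin$ lower bound is exactly how that constant $C$ arises), invoke $g_+(t)\le g(t)$ to get $W(g_+(t))\le W(g(t))$ at singular times, and integrate. The paper's sketch is terser---it simply notes the right-hand side is not integrable---whereas you carry out the Gronwall computation explicitly and make the role of condition~(ii)(a) in propagating the $\Rmin$ bound across surgeries visible; but the substance is the same.
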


Indeed, assume that Theorems~\ref{reconnait topo},~\ref{existence 1}, and~\ref{extinction} have
been proved.
Let $M$ be a closed, \irr, $RP^2$-free $3$-manifold such that $\pi_1M$ is finite, and let $g_0$ be
an arbitrary metric on $M$. Theorem~\ref{extinction} provides a positive
number $T(g_0)>0$ such that no weak solution with initial condition $g_0$ can exist
up to time $T(g_0)$. Let $T'>0$ and $\{g(t)\}_{0\le t< T'}$ be given by Theorem~\ref{existence 1} applied with $T=T(g_0)$.
Then Case~(ii) of the conclusion of that theorem must hold. This implies that $M$
has a locally canonical metric. Applying Theorem~\ref{reconnait topo} and noting
that $S^2\times S^1$ is not \irr, we conclude that $M$ is spherical.

Let us sketch the proof of Theorem~\ref{extinction}. Let $M$ be a $3$-\var\ satisfying
the hypotheses and $g_0$ be a riemannian metric on $M$. First we show,
following~\cite{cm:question}, that there is a constant $T$ such that there exists no Ricci flow solution
on $M$ defined on $[0,T]$, with initial condition $g(0)=g_0$.
It will then be straightforward to extend the argument to prove that there can in fact
exist no weak solution defined on the same interval.

Let $\Omega$ be the space of smooth maps $f:S^2\times [0,1] \to M$ such that $f(S^2\times
\{0\})$ and  $f(S^2\times \{1\})$ are points. Since $\pi_1M$ is finite, Lemma~\ref{lem:classic}
implies that $\pi_3M\neq 0$. It follows that there exists $f_0\in\Omega$ which is not
homotopic to a constant map. Let $\xi$ be the homotopy class of $f_0$.
We set $$W(g) := \inf_{f\in \xi} \max_{s\in [0,1]} E(f(\cdot,s)),$$
where $E$ denotes the energy $$E(f(\cdot,s)) = \frac12\int_{S^2} \vert \nabla_x f(x,s)\vert ^2\, dx.$$

Let $\{g(t)\}_{t\in [0,T]}$ be a Ricci flow solution such that $g(0)=g_0$. The function
$t\mapsto W(g(t))$ is continuous. Colding and Minicozzi~\cite{cm:question,cm:width} prove that there exists a constant $C\ge 0$ depending only on $g_0$ such that:

\begin{equation}\label{eq:W}
\frac{d}{dt} W(g(t)) \le -4\pi + \frac{3}{4(t+C)} W(g(t)),
\end{equation}
where $\frac{d}{dt} W(g(t))$ is to be interpreted as $$\limsup_{h>0, h\to 0} \frac{W(g(t+h)) - W(g(t))}
{h}$$ in case $W(g(t))$ is not differentiable at $t$ in the traditional sense.

Since the right-hand side is not integrable and $W(g(t))$ cannot become negative, this easily implies an upper bound on $T$ depending only on $C$, hence on $g_0$.

If we replace $\{g(t)\}_{t\in [0,T]}$ by a weak solution, then $W(g(t))$ need not be continuous
in $t$. However, condition~(ii)(b) in the definition of a weak solution immediately implies
that
\begin{equation}\label{eq:semi}
W(g_+(t))\le  W(g(t))
\end{equation}
if $t$ is a singular time. One can show that the inequality~(\ref{eq:W}) still holds at regular times. This is enough to conclude that the previous a priori upper bound on the existence
time of Ricci flow also applies to weak solutions.

\begin{rem}(*)
In the above argument, property (ii)(b) of the definition of weak solutions, which amounts to the fact
that surgery does not increase distances, was used in a crucial way. This part of the proof
has been simplified: indeed, when one uses Perelman's surgery construction, the pre-surgery metric
is not defined on the whole manifold, which makes the proof of~(\ref{eq:semi}) more complicated
(cf.~\cite{besson:bourbaki}).
\end{rem}

\begin{rem}(*)
Alternatively, one can replace $W(g)$ by Perelman's invariant $A(\alpha,g)$ introduced in~\cite{Per3}
and follow~\cite{Mor-Tia} for the derivation of the inequality analogous to~(\ref{eq:W}). Working
with weak solutions in our sense as opposed to Ricci flow with $\delta$-cutoff leads to a
similar simplification.
\end{rem}

\section{The aspherical case}
\label{sec:hyper}

In this section, we outline the proof of the other `half' of the geometrization
 theorem~\ref{geometrisation}, where $\pi_1M$ is assumed to be infinite.
Subsection~\ref{sub:graph} is devoted to background material on Haken
manifolds, graph manifolds and the JSJ-decomposition. In Subsection~\ref{sub:flow},
we state Theorem~\ref{existence 2}, which contains all the results on the Ricci flow that
are needed in the proof. In Subsection~\ref{sub:seq}, we explain how to use this result
to reach the desired conclusion.

\subsection{Some 3-manifold theory}\label{sub:graph}

 A \bydef{Haken} $3$-\var\ is a compact, \irr\ $3$-\var\ that contains an \incomp\ surface.
A $3$-\var\ is \bydef{atoroidal} if every incompressible torus in $M$ is parallel to a
component of $\bord M$.
Jaco-Shalen~\cite{js:seifert} and Johannson~\cite{joh:hom} showed that each closed Haken $3$-\var\ contains a canonical family
of disjoint  \incomp\ tori $T_1,\ldots,T_n$, called the \bydef{JSJ-decomposition} of $M$, such that each connected component of $M$ split along
$T_1,\ldots,T_n$ is Seifert fibered or atoroidal. Thurston proved that the atoroidal
pieces which are not Seifert are hyperbolic (see~\cite{otal:fibered,otal:haken,kap:hyp}.) Note that the JSJ-decomposition may be empty. In this case, $M$ is Seifert or
hyperbolic. Therefore, the above-mentioned results prove Theorem~\ref{geometrisation} in the case
where $M$ is Haken.

We say that $M$ is a \bydef{graph manifold} if it is a union of circle bundles glued
along their boundaries. This notion was introduced by F.~Waldhausen~\cite{wald:klasse}.
We collect some useful facts about graph manifolds in the
next proposition:
\begin{prop}\label{prop:graph}
\begin{enumerate}
\item Any Seifert manifold is a graph manifold.
\item If $M$ is a Haken manifold, then $M$ is a graph manifold if
and only if its JSJ-decomposition has only Seifert pieces.
\item If $M$ is an irreducible graph manifold, then $M$ is Seifert or contains an \incomp\ torus.
\end{enumerate}
\end{prop}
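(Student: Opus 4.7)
My plan is to prove the three parts in the order (i), (iii), (ii), since (i) is essentially a restatement of definitions, (iii) contains the main technical work, and (ii) combines (i) and (iii) with standard JSJ theory. For (i), let $\pi: M \to \Sigma$ be the Seifert fibration of $M$ over a $2$-orbifold $\Sigma$, and choose disjoint small disk neighborhoods $\Delta_1,\ldots,\Delta_n \subset \Sigma$ of the cone points of $\Sigma$. Each $\pi^{-1}(\Delta_k)$ is a fibered solid torus, hence a genuine circle bundle over a disk, while the complement $\pi^{-1}(\Sigma \setminus \bigcup_k \mathrm{int}(\Delta_k))$ is a circle bundle over a compact surface with boundary. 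These pieces are glued along the tori $\pi^{-1}(\partial \Delta_k)$, exhibiting $M$ as a graph manifold.

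For (iii), fix a decomposition of the irreducible graph manifold $M$ into circle bundles $N_1,\ldots,N_p$ glued along tori $\mathcal T$, with $|\mathcal T|$ minimal among all such decompositions of $M$. If $|\mathcal T| = 0$, then $M = N_1$ is Seifert and we are done. Otherwise, I claim that some $T \in \mathcal T$ is incompressible in $M$. Suppose not; pick any $T \in \mathcal T$ and a compressing disk $D$ for it. By a standard innermost-circle argument applied to $D \cap (\bigcup \mathcal T \setminus T)$, and using irreducibility of $M$ to remove trivial circles of intersection by isotopy across balls, I may assume after possibly replacing $T$ that $D$ lies in a single Seifert piece $N_j$. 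Standard Seifert-fibered-space theory then forces $N_j$ to be a solid torus, since a boundary torus of a Seifert manifold is compressible in that manifold only in the solid-torus case. Its gluing to the adjacent piece $N_{j'}$ along $T$ is thus a Dehn filling of $N_{j'}$. If the filling slope coincided with the fiber slope of $N_{j'}$ on $T$, then a saturated annulus in $N_{j'}$ capped off by two meridian disks in $N_j$ would produce an essential $2$-sphere in $M$, contradicting irreducibility; so the Seifert fibration extends across the filling (possibly with a new exceptional fiber), yielding a graph decomposition of $M$ with fewer pieces and contradicting minimality.

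For (ii), the ``if'' direction follows at once from (i), since a union of graph manifolds glued along tori is a graph manifold. Conversely, if $M$ is a Haken graph manifold and $P$ is a piece of its JSJ-decomposition, then $P$ is irreducible and bounded by incompressible tori, and by isotoping the JSJ tori into good position with respect to a graph decomposition of $M$ (making them saturated in each Seifert piece, which is possible since incompressible tori in Seifert manifolds can be isotoped to saturated position), one checks that $P$ inherits a graph manifold structure. By (iii), $P$ is either Seifert or contains an essential torus; since JSJ pieces are by construction atoroidal or Seifert, $P$ must be Seifert. The main obstacle is the proof of (iii): localizing the compressing disk inside a single Seifert piece via the innermost-circle analysis requires care, and the verification that Dehn filling along a non-fiber slope preserves the Seifert property relies crucially on irreducibility of $M$ through the saturated-annulus argument.
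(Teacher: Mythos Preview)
The paper does not prove this proposition; it is stated without proof as a collection of well-known facts about graph manifolds, with an implicit pointer to Waldhausen~\cite{wald:klasse}. Your argument follows the standard route and is essentially correct.

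Two points deserve more care. In part~(iii), your treatment of the case where the meridian of the solid torus $N_j$ coincides with the fiber slope of $N_{j'}$ is incomplete. If the base of $N_{j'}$ is a disk or an annulus, there is no essential saturated annulus in $N_{j'}$ with both boundary circles on $T$, so your sphere is never produced; these cases must be handled separately (in the first, $M=N_j\cup N_{j'}\cong S^1\times S^2$, contradicting irreducibility; in the second, $N_j\cup N_{j'}$ is again a solid torus, contradicting minimality directly). When the base is larger and the annulus does exist, you should still justify that the capped-off sphere fails to bound a ball in $M$ rather than merely in $N_j\cup N_{j'}$.

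In part~(ii), you apply~(iii) to a JSJ piece $P$ with torus boundary, but as stated the conclusion ``contains an incompressible torus'' is vacuous for such a manifold. What you actually need, and what your proof of~(iii) in fact gives since the decomposing tori lie in the interior, is that an irreducible graph manifold with torus boundary is Seifert or contains an incompressible torus \emph{not parallel to the boundary}; you should make this explicit. Similarly, the claim that $P$ inherits a graph structure requires first isotoping the JSJ tori off the graph-decomposition tori (via an innermost-circle argument using irreducibility) before making them vertical in each circle-bundle piece; the absence of closed horizontal tori in circle bundles over surfaces with nonempty boundary is what makes the last step go through.
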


We shall be concerned with the long time behavior of the Ricci flow, or weak solutions
if Ricci flow is not defined for all time. More precisely, we look at the hyperbolically
rescaled solution $\tilde g(t)=(4t)^{-1} g(t)$. Heuristically, one can expect $\tilde g(t))$
to converge to a hyperbolic metric on $M$ if there exists one, and to collapse if $M$ is
a graph manifold. If $M$ has a nonempty JSJ-decomposition with at least one hyperbolic
piece, then it should split into a `thick' part and a `thin' part, where the thick part
corresponds to the union of the hyperbolic pieces, and the `thin' part corresponds to
the union of the Seifert pieces, which is a graph manifold. These two parts should be
separated by \incomp\ tori.

\begin{rem}
It follows from a theorem of R.~Myers~\cite{Myers}
that any closed $3$-\var\ $M$ contains a \emph{hyperbolic knot}, i.e.~an embedded circle whose
complement is hyperbolic. In particular, $M$ can be decomposed as the union of a hyperbolic
manifold and a solid torus (which is a graph manifold). Thus when we work with the
riemannian metrics given by weak solutions to the Ricci flow, it is crucial to show
that the tori that appear as boundary components of the thick part are \incomp; otherwise,
we would not obtain any further understanding of the topology of $M$.
\end{rem}
\medskip

Before closing this subsection, we define another classical notion in $3$-manifold
topology which is needed in the sequel:
let $X$ be a compact $3$-\var\ whose boundary is a (possibly empty) union of tori. We say
that $M$ \bydef{can be obtained from $X$ by Dehn filling} if there exists an embedding
$f:X\to M$ and a (possibly empty, possibly disconnected) $1$-submanifold $L\subset M$ such that $M\setminus f(X)$ is a regular
neighborhood of $L$. By convention, we allow $L$ to be empty, so that the class of manifolds
that can be obtained from $M$ by Dehn filling contains $M$ itself. This will be convenient later on.

If $Y$ is an open $3$-\var\ diffeomorphic to $X\setminus \bord X$ and $M$ can be
obtained from $X$ by Dehn filling, then we also say that $M$ can be obtained from $Y$ by
Dehn filling. Thus the theorem of Myers quoted above implies that any closed $3$-\var\ can
be obtained from a hyperbolic manifold by Dehn filling.

\subsection{Long time behavior of weak solutions}
\label{sub:flow}

Until the end of Section~\ref{sec:hyper}, $M$ is a closed, \irr\ $3$-\var\ whose fundamental
group is infinite. By Lemma~\ref{lem:classic}, this implies that  $M$ is aspherical.

Using the contrapositive of Theorem~\ref{reconnait topo}, it follows from our hypotheses that $M$
does not admit locally canonical metrics. Hence Theorem~\ref{existence 1} implies that for
any initial condition, there exists a weak solution defined on any compact interval
$[0,T]$. We need to strengthen this in two respects: first we need weak
solutions defined on $[0,+\infty)$; second, we want them to satisfy a list of geometric properties
relevant to the  topological applications. In order to state those properties,
we need some terminology.

\begin{defi}
Let $k>0$ be a whole number. Let $(M_n,g_n,x_n)$ be a sequence of pointed riemannian manifolds,
and let $(M_\infty,g_\infty,x_\infty)$ be a riemannian manifold. We shall say that $(M_n,g_n,x_n)$
\bydef{converges to} $(M_\infty,g_\infty,x_\infty)$ in the $\mathcal C^k$-sense if
there exists a sequence of numbers $\epsi_n>0$ tending to zero, and a sequence
of $\mathcal C^k$-diffeomorphisms $\varphi_n$ from the metric ball $B(x_\infty,\epsi_n^{-1})\subset M_\infty$
to the metric ball $B(x_n,\epsi_n^{-1})\subset M_n$ such that $\varphi_n^*(g_n)-g_\infty$
has $\mathcal C^k$-norm less then $\epsi_n$ everywhere. We say that the sequence
\bydef{subconverges} if it has a convergent subsequence.
\end{defi}

\begin{rem}
Note that the limit manifold $M_\infty$ need not be homeomorphic to any of the $M_n$'s.
Typically, the $M_n$'s are compact and $M_\infty$ is noncompact. However, if
$M_\infty$ is compact, then for large $n$, the manifold $M_n$ must be diffeomorphic to
$M_\infty$.
\end{rem}

\begin{defi}
Let $(X,g)$ be a riemannian $3$-\var. We say that a point $x\in X$ is $\epsi$-\bydef{thin
with respect to} $g$ if there exists a radius $\rho\in (0,1]$ such that the ball
$B(x,\rho)$ has the following two properties: all sectional curvatures on this ball are
bounded below by $\rho^{-2}$, and the volume of this ball is less than $\epsi \rho^3$.
Otherwise, $x$ is $\epsi$-\bydef{thick with respect to} $g$.
\end{defi}

\medskip
We set $$\hat R(g) := \Rmin(g) \cdot \vol(g)^{2/3}.$$
This quantity has two important properties: it is scale-invariant, and
it is nondecreasing along the Ricci flow on a closed manifold, as long as it remains
nonpositive. This is also true for weak solutions by condition~(ii) of Definition~\ref{def:ws}.
  Since our manifold $M$ is aspherical, it does not admit any metric of
  positive scalar curvature~\cite{gl:scalar,sy:scalar}. Hence $\hat R(g)$ is nonpositive
  for each metric $g$ on $M$. As a consequence, we have:
\begin{prop}\label{prop:rhat}
Let $\{g(t)\}$ be a weak solution on $M$. Then the function $t\mapsto \hat R(g(t))$ is nondecreasing.
\end{prop}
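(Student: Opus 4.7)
My plan is to establish the monotonicity separately on each maximal regular subinterval and across each singular time, and then chain these together on any compact subinterval (where there are only finitely many singular times). The key sign input, used throughout, is that because $M$ is aspherical it carries no metric of positive scalar curvature, so $\Rmin(g(t))\le 0$ for every $t$ in the domain of $g(\cdot)$; this is the only place the hypothesis on $M$ enters.

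On a maximal subinterval of regular times, where $g(\cdot)$ is a smooth Ricci flow, I would invoke two standard ingredients. First, Hamilton's maximum principle applied to $\partial_t R = \Delta R + 2|\Ric|^2$, combined with the dimension-three inequality $|\Ric|^2 \ge R^2/3$, yields (in the Dini sense) $\tfrac{d}{dt}\Rmin(g(t)) \ge \tfrac{2}{3}\Rmin(g(t))^2$. Second, from $\tfrac{d}{dt}\,d\vol_{g(t)} = -R\, d\vol_{g(t)}$ one gets $V'(t) \le -\Rmin(g(t))\,V(t)$ with $V(t):=\vol(g(t))$. Expanding $\tfrac{d}{dt}\hat R(g(t)) = \Rmin'\,V^{2/3} + \tfrac{2}{3}\Rmin V^{-1/3} V'$, the first term is bounded below by $\tfrac{2}{3}\Rmin^2 V^{2/3}$; for the second, multiplying the volume estimate by the nonpositive factor $\Rmin$ reverses its sign and yields the matching lower bound $-\tfrac{2}{3}\Rmin^2 V^{2/3}$. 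The two contributions cancel, so $\tfrac{d}{dt}\hat R(g(t))\ge 0$ on each regular subinterval.

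At a singular time $t$, I would compare $\hat R$ before and after the jump. Condition (ii)(b) of Definition~\ref{def:ws}, namely $g_+(t)\le g(t)$ pointwise as bilinear forms, forces $d\vol_{g_+(t)}\le d\vol_{g(t)}$ pointwise and hence $V_+:=\vol(g_+(t))\le\vol(g(t))=:V$; condition (ii)(a) gives $R_+:=\Rmin(g_+(t))\ge\Rmin(g(t))=:R$. I split into two cases: if $R_+\ge 0$ then $\hat R(g_+(t))\ge 0\ge\hat R(g(t))$; if $R_+<0$ then $R\le R_+<0$, so $|R_+|\le|R|$, and combined with $V_+^{2/3}\le V^{2/3}$ this gives $|R_+|V_+^{2/3}\le|R|V^{2/3}$, i.e.\ $\hat R(g_+(t))\ge\hat R(g(t))$. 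Chaining the regular-time monotonicity with the jump inequality at each of the finitely many singular times in any compact subinterval yields the global statement. The main obstacle is the regular-time step: the two differential inequalities for $\Rmin$ and $V$ only combine in the correct direction because $\Rmin\le 0$, so asphericity is used crucially and one must track signs with care; the singular-time comparison is essentially formal once the case split on the sign of $R_+$ is in hand.
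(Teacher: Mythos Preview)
Your proof is correct and follows essentially the same approach as the paper: the paper merely states that $\hat R$ is nondecreasing along Ricci flow as long as it remains nonpositive, invokes condition~(ii) of Definition~\ref{def:ws} to handle the singular times, and appeals to asphericity (via \cite{gl:scalar,sy:scalar}) to guarantee $\Rmin\le 0$ throughout. You have simply supplied the standard details behind each of these assertions; in particular your derivation of $\tfrac{d}{dt}\hat R\ge 0$ from the evolution of $\Rmin$ and $\vol$ and your sign-case analysis at singular times are exactly what the paper leaves implicit.
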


When $H$ is a hyperbolic manifold, we let $\hat R(H)$ denote $\hat R(g_\hyp)$,
 where $g_\hyp$ is the hyperbolic metric. Note that this number is equal to
 $-6\cdot  \vol(g_\hyp)^{2/3}$, since $g_\hyp$ has constant scalar curvature equal to $-6$.
 Hence if $H_1,H_2$ are two hyperbolic manifolds, then $\vol(H_1)\le \vol(H_2)$ if
 and only if $\hat R(H_1)\ge \hat R(H_2)$.

We are now in position to state the main result of this subsection (cf.~\cite[Sections 6 and 7]{Per2}):
\begin{theo}\label{existence 2}
Let $M$ be a closed, \irr, aspherical $3$-\var. For every riemannian metric $g_0$ on $M$,
there exists a weak solution $g(t)$ defined on $[0,+\infty)$ with
the following properties:
\begin{enumerate}
\item $g(0)=g_0$
\item The volume of the hyperbolically rescaled metric $\tilde g(t)$ is bounded uniformly in $t$.
\item For every $\epsi>0$ and every sequence $(x_n,t_n) \in M\times [0,+\infty)$, if $t_n$ tends to
$+\infty$ and $x_n$ is $\epsi$-thick
with respect to $\tilde g(t_n)$, then there exists a hyperbolic $3$-manifold $H$ and a basepoint $*\in H$
such that the sequence $(M,\tilde g(t_n),x_n)$
subconverges in the pointed $\mathcal C^2$ topology to $(H,g_\hyp,*)$. (Recall
that for us, `hyperbolic manifold' means complete and of bounded volume.)
\item For every sequence $t_n\to\infty$, the sequence $\tilde g(t_n)$ has controlled curvature in the sense of Perelman.
\end{enumerate}
\end{theo}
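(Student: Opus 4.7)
The plan is first to obtain a weak solution on $[0,+\infty)$ by iterating Theorem~\ref{existence 1}, and then to deduce the geometric statements (ii)--(iv) by monotonicity and compactness arguments, invoking the $\kappa$-noncollapsing and canonical-neighborhood bounds built into the construction of Section~\ref{sec:solution}. Asphericity of $M$ has two immediate consequences: by Lemma~\ref{lem:classic} we have $\pi_1 M$ infinite, so $M\neq RP^3$ and, by the observation preceding Theorem~\ref{existence 1}, $M$ is $RP^2$-free; and $M$ is neither spherical nor $S^2\times S^1$, so by the contrapositive of Theorem~\ref{reconnait topo} it admits no locally canonical metric. Alternative (ii) of Theorem~\ref{existence 1} therefore cannot occur, and concatenating the resulting weak solutions on arbitrarily large intervals $[0,T]$ yields a weak solution on $[0,+\infty)$ with $g(0)=g_0$.

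For (ii), the starting point is the scalar-curvature bound $\Rmin(g(t)) \ge -3/(2(t+C_0))$, coming from the maximum-principle inequality $\partial_t \Rmin \ge \tfrac{2}{3}\Rmin^2$ at regular times, with clause (ii)(a) of Definition~\ref{def:ws} preventing any drop across singular times. At regular times the volume evolution reads $\tfrac{d}{dt}\vol(g(t)) = -\int_M R\,dv \le -\Rmin(g(t))\,\vol(g(t)) \le \tfrac{3}{2(t+C_0)}\vol(g(t))$, while at singular times $\vol$ decreases since $g_+(t)\le g(t)$. Integrating yields $\vol(g(t))\le \vol(g_0)(1+t/C_0)^{3/2}$, so $\vol(\tilde g(t)) = (4t)^{-3/2}\vol(g(t))$ is uniformly bounded in $t$.

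For (iii), I would invoke Hamilton's compactness theorem for pointed Ricci flows. At an $\epsi$-thick point $x_n$ for $\tilde g(t_n)$, the sectional curvatures of $\tilde g(t_n)$ are bounded on a ball of definite radius, and the local volume bound combined with the $\kappa$-noncollapsing of the weak solution (preserved through surgery) gives a uniform lower bound on the injectivity radius there. The parabolic rescalings of the weak solution around $(x_n,t_n)$ therefore subconverge in the pointed $\mathcal C^2$ sense to a complete Ricci flow $(H, g_\infty(\cdot), *)$. The rescaled bound $\Rmin(\tilde g(t)) \ge -6 t/(t+C_0)$ passes to the limit as $R_{g_\infty}\ge -6$, and since $\hat R$ is scale-invariant and nondecreasing along the original flow, $\hat R(g_\infty(\cdot))$ must be constant; the rigidity case of the $\hat R$-monotonicity then forces $g_\infty$ to be Einstein with $\Ric_{g_\infty} = -2 g_\infty$, i.e.\ in dimension three a static hyperbolic metric of sectional curvature $-1$. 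The volume bound from (ii) finally gives $\vol(H,g_\infty) < \infty$, completing (iii).

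Property (iv)---controlled curvature in Perelman's sense---follows from the Hamilton-Ivey pinching estimate, preserved across surgeries in the weak solution construction, together with the canonical-neighborhood theorem at points of very large scalar curvature. In my view, the main obstacle is not any of the a~posteriori arguments above but the \emph{construction} of the weak solution with the requisite long-term properties: one must choose, inductively over successive surgery times, curvature thresholds $\Theta(t)\to\infty$ and cutoff parameters $\delta(t)\to 0$ in such a way that $\kappa$-noncollapsing at all scales (via Perelman's reduced-volume monotonicity), canonical neighborhoods at high scalar curvature, and Hamilton-Ivey pinching persist simultaneously for all $t\ge 0$. Closing this induction, which rests on the classification of $\kappa$-solutions and a careful analysis of the geometry near the cutoff scale, is the technical heart of Perelman's proof and is deferred to Section~\ref{sec:solution}.
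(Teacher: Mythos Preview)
The paper does not actually prove Theorem~\ref{existence 2}: it is stated with a reference to \cite[Sections 6--7]{Per2}, details are deferred to the monograph~\cite{b3mp:book}, and Section~\ref{sec:solution} sketches only Theorem~\ref{existence 1}, explicitly remarking that the infinite-time version requires time-dependent surgery parameters and ``an additional layer of complexity''. Your outline is in the right spirit, and you correctly locate the heart of the matter in your final paragraph. Two points nevertheless need correction.

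First, the opening paragraph is structurally misleading. Concatenating applications of Theorem~\ref{existence 1} (restarting from $g(T)$ each time) does produce \emph{a} weak solution on $[0,+\infty)$, since the axioms of Definition~\ref{def:ws} are trivially verified at the junction times, and your derivation of~(ii) is valid for any weak solution because it uses only clause~(ii)(a)--(b) and the maximum principle. But (iii) and (iv) are \emph{not} properties of an arbitrary weak solution: they rely on $\kappa$-noncollapsing and canonical-neighborhood bounds with controlled parameters, and these do not survive naive iteration, since each invocation of Theorem~\ref{existence 1} carries its own $r,\delta$ depending on the (renormalized) initial data. The solution and the long-time estimates must be constructed simultaneously with time-dependent $r(t),\delta(t)$, exactly as you say at the end; so the proof cannot begin by manufacturing the solution out of Theorem~\ref{existence 1} and then checking (iii)--(iv) a posteriori on that solution. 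Second, your sketch of~(iii) via ``rigidity of $\hat R$-monotonicity'' is too quick: the pointed limit $H$ is typically noncompact, so $\hat R(g_\infty)$ is not directly available, and constancy of $\hat R$ on the limit flow is not what $\hat R$-monotonicity on $M$ gives. The standard route (Hamilton, Perelman) uses the monotone quantity $(-\Rmin)^{3/2}\vol$ on the \emph{compact} flow to force the traceless Ricci tensor to vanish on the thick part in the limit, so that any pointed limit is Einstein with $R=-6$, hence of constant sectional curvature $-1$; finite volume then follows from~(ii), as you note.
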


`Controlled curvature in the sense of Perelman' is a technical property, which is weaker
than a global two-sided curvature bound, but suffices for some limiting arguments.
It will not be discussed here. See~\cite{b3mp:hyper6} for the definition.

\begin{rem}
The assumption~that $M$ is irreducible is in fact redundant: since $M$ is aspherical,
any embedded $2$-sphere in $M$ bounds a homotopy $3$-ball $B$; by the positive
solution to the Poincar\'e Conjecture, $B$ must be diffeomorphic to the $3$-ball. We include this hypothesis to emphasize that various parts of the proof of
Theorem~\ref{geometrisation} can be made independent from one another.
\end{rem}

\begin{rem}
Condition~(iii) may be vacuous, i.e.~there may exist no such sequence of $\epsi$-thick basepoints
for any fixed $\epsi$. This happens for instance if $g(t)$ is a flat solution on a $3$-torus.
\end{rem}

\subsection{Sequences of riemannian metrics on aspherical $3$-manifolds}
\label{sub:seq}

We begin with a direct corollary of Theorem~\ref{existence 2}:
\begin{corol}\label{corol:existence 2}
Let $M$ be a closed, \irr, aspherical $3$-\var. For every riemannian metric $g_0$ on $M$,
there exists an infinite sequence of riemannian metrics $g_1,\ldots,g_n,\ldots$ with
the following properties:
\begin{enumerate}
\item The sequence $(\hat R(g_n))_{n\ge 0}$ is nondecreasing. In particular, it has a limit,
which is greater than or equal to $\hat R(g_0)$.
\item The sequence $(\vol(g_n))_{n\ge 0}$ is bounded.
\item For every $\epsi>0$ and every sequence $x_n\in M$, if $x_n$ is $\epsi$-thick
with respect to $g_n$, then there exists a hyperbolic $3$-manifold $H$ and a basepoint $*\in H$
such that the sequence $(M,g_n,x_n)$
subconverges in the pointed $\mathcal C^2$ topology to $(H,g_\hyp,*)$.
\item The sequence $g_n$ has controlled curvature in the sense of Perelman.
\end{enumerate}
\end{corol}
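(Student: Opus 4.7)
The plan is to apply Theorem~\ref{existence 2} to $g_0$ to obtain a weak solution $\{g(t)\}_{t \in [0, +\infty)}$ satisfying properties (i)--(iv) of that theorem, then choose any sequence $t_n \to +\infty$ with $t_n > 0$ (for instance $t_n := n$), and set $g_n := \tilde g(t_n)$ for $n \ge 1$, keeping $g_0$ as the given initial metric. Each of the four conclusions of the corollary is then obtained by translating the corresponding clause of the theorem from continuous time to the discrete time sequence $(t_n)$.

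The one non-obvious ingredient is the scale invariance of $\hat R$ in dimension three: under a homothetic change $g \mapsto \lambda g$ one has $\Rmin(\lambda g) = \lambda^{-1} \Rmin(g)$ and $\vol(\lambda g) = \lambda^{3/2} \vol(g)$, so $\hat R(\lambda g) = \lambda^{-1} \Rmin(g) \cdot \lambda \, \vol(g)^{2/3} = \hat R(g)$. Consequently $\hat R(g_n) = \hat R(\tilde g(t_n)) = \hat R(g(t_n))$, which is nondecreasing in $n$ by Proposition~\ref{prop:rhat} applied to the increasing sequence $(t_n)$, and satisfies $\hat R(g_n) \ge \hat R(g(0)) = \hat R(g_0)$ for every $n \ge 1$; this proves (i). The limit of $\hat R(g_n)$ exists as a finite real number because $\hat R$ is nonpositive on every metric on the aspherical manifold $M$, which admits no metric of positive scalar curvature.

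Property (ii) is immediate from clause (ii) of Theorem~\ref{existence 2}, since $\vol(g_n) = \vol(\tilde g(t_n))$ is uniformly bounded. Property (iii) follows directly from clause (iii) of that theorem: given a sequence $(x_n)$ of basepoints that are $\epsi$-thick with respect to $g_n = \tilde g(t_n)$, the pair $(x_n, t_n)$ satisfies $t_n \to +\infty$, so the theorem provides a hyperbolic manifold $H$ and a basepoint to which $(M, \tilde g(t_n), x_n)$ subconverges in the pointed $\mathcal C^2$ topology. Property (iv) is likewise a direct application of clause (iv) of the theorem to the sequence $t_n \to +\infty$. Thus the argument is essentially a mechanical transcription; the only substantive observation is the scale invariance of $\hat R$, and there is no serious obstacle beyond that verification.
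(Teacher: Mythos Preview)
Your proof is correct and follows essentially the same approach as the paper: apply Theorem~\ref{existence 2}, set $g_n:=\tilde g(t_n)$ for a sequence $t_n\to\infty$, use Proposition~\ref{prop:rhat} together with the scale invariance of $\hat R$ for~(i), and read off~(ii)--(iv) directly from the corresponding clauses of the theorem. Your write-up is in fact slightly more detailed than the paper's, since you spell out the scale-invariance computation and the reason the limit in~(i) is finite.
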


\begin{proof}
Applying Theorem~\ref{existence 2}, we obtain a weak solution $\{g(t)\}_{t\in [0,\infty)}$ with initial condition $g_0$.
Set $g_n:=\tilde g(t_n)$ for $n\ge 1$. Using Proposition~\ref{prop:rhat} and the
scale invariance of $\hat R$, we have $$\hat R(g_0)\le
\hat R(g_1) \le \cdots \le \hat R(g_n) \le \cdots.$$
This proves~assertion~(i). Assertions~(ii), (iii) and (iv)
follow from their counterparts in the conclusion of Theorem~\ref{existence 2}. 
\end{proof}

The next task is to explore the topological consequences of the existence of
a sequence of metrics satisfying the conclusion of Corollary~\ref{corol:existence 2}. This
leads to Proposition~\ref{summary} below.

\begin{prop}\label{summary}
Let $g_0,g_1,\dots$ be a sequence of riemannian metrics on $M$ as in Corollary~\ref{corol:existence 2}.
Then one of the following conclusions hold:
\begin{enumerate}
\item $M$ is a graph manifold,
\item $M$ is hyperbolic,
\item $M$ contains an incompressible torus, or
\item There exists an open hyperbolic manifold $H$ such that $M$ can be obtained
by Dehn filling on $H$, and $\hat R(H)\ge \hat R(g_0)$.
\end{enumerate}
\end{prop}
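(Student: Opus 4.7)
The plan is to apply a thick--thin decomposition to $(M,g_n)$ for large $n$ and match the resulting pieces against the four alternatives. Write $M_n^{\epsi}$ for the $\epsi$-thick part of $(M,g_n)$; the case analysis hinges on how $M_n^{\epsi}$ behaves as $n\to\infty$ and $\epsi\to 0$. First I would dispose of the collapsing extreme: if there exists some $\epsi>0$ such that $M$ is entirely $\epsi$-thin with respect to $g_n$ for all large $n$, then property~(iv) of Corollary~\ref{corol:existence 2} places the sequence in the scope of a collapsing theorem for $3$-manifolds (Shioya--Yamaguchi~\cite{sy}, or Perelman's collapsing theorem in dimension $3$), which forces $M$ to be a graph manifold, i.e.\ alternative~(i).

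In the complementary regime there are some $\epsi_0>0$ and basepoints $x_n\in M$ with each $x_n$ being $\epsi_0$-thick for $g_n$. Property~(iii) then produces a pointed hyperbolic limit $(H,g_\hyp,*)$. I would in fact assemble a \emph{maximal} such limit: the Margulis lemma together with the uniform volume bound~(ii) implies that only finitely many disjoint $\epsi_0$-thick regions can coexist in $(M,g_n)$, and by choosing basepoints in each one and passing to a diagonal subsequence one obtains a finite disjoint union of pointed hyperbolic limits whose total volume is at most $\liminf \vol(g_n)$. For readability I will write as if there is a single component $H$; the general case is formally identical. For $\epsi$ small and $n$ large, $\mathcal C^2$-convergence supplies a smooth embedding $\Phi_n$ of the truncation $H^\epsi:=\{x\in H:\mathrm{inj}(x)\ge\epsi\}$ into $M$ whose image is an arbitrarily good approximation of $M_n^\epsi$.

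If $H$ is closed, then $H^\epsi=H$ for $\epsi$ small, so $\Phi_n(H)$ is a closed codimension-$0$ submanifold of the connected manifold $M$ and $\Phi_n$ upgrades to a diffeomorphism $M\cong H$: this is alternative~(ii). Otherwise $H$ has $k\ge 1$ cusps, and $\Phi_n(\bord H^\epsi)$ is a disjoint union of tori $T_1,\dots,T_k\subset M$ separating the nearly hyperbolic region from the thin region. If some $T_j$ is incompressible in $M$, we are in alternative~(iii). Otherwise every $T_j$ is compressible in $M$; I would then apply the loop theorem on each $T_j$ and use irreducibility and $RP^2$-freeness of $M$, together with the fact that every embedded $2$-sphere in $M$ bounds a ball, to show that after compression the corresponding component of the thin region reduces to a $3$-ball, and hence that the original component is a solid torus. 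Thus $M$ is obtained from $H$ by Dehn filling along the cores of these solid tori, which is alternative~(iv).

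It remains to produce the inequality $\hat R(H)\ge\hat R(g_0)$ promised in~(iv). The $\mathcal C^2$-convergence on the thick part gives $\Rmin$ approaching $-6$ on $\Phi_n(H^\epsi)$, and $\vol(g_n)\ge\vol(\Phi_n(H^\epsi))\to\vol(H)$; the scalar curvature evolution along Ricci flow combined with the hyperbolic rescaling built into Theorem~\ref{existence 2} yields $\Rmin(g_n)\ge -6+o(1)$ globally on $M$. Together with monotonicity~(i) of $\hat R$ this gives
\[
\hat R(g_0)\ \le\ \lim_n \hat R(g_n)\ \le\ -6\,\vol(H)^{2/3}\ =\ \hat R(H).
\]
I expect the main obstacle to lie in the compressible-torus step: one must show that when every cusp torus compresses in $M$, the thin region---which is a priori only a collapsed graph manifold with torus boundary, possibly containing Seifert pieces of nontrivial base---is in fact a disjoint union of solid tori. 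Threading irreducibility, $RP^2$-freeness, and the internal structure of the thin part's collapse together to rule out any more complicated piece abutting the thick part is the genuinely delicate point of the proof.
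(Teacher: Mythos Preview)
Your overall architecture matches the paper's, and the inequality $\hat R(H)\ge\hat R(g_0)$ is handled essentially as there. The substantive gap is exactly where you flagged it, but your target in the compressible-torus step is wrong. You aim to show that when every cusp torus $T_j$ compresses, the thin region is a disjoint union of solid tori. This can simply fail: a compressible torus in an irreducible $3$-manifold either bounds a solid torus \emph{on some side} or lies in a $3$-ball, and nothing forces that side to be the thin one. A one-cusped hyperbolic limit $H$ can perfectly well embed via $\Phi_n$ inside a solid torus (or a $3$-ball) in $M$; then $T_1$ bounds a solid torus on the \emph{thick} side, and $M$ is not obtained from $H$ by Dehn filling at all. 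So alternative~(iv) is not the only outcome of the ``all tori compressible'' branch.

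The paper's fix is an extra dichotomy you are missing. Call a thick piece $\bar H^i_n$ \emph{abelian} if the image of $\pi_1\bar H^i_n\to\pi_1 M$ is abelian (this is what happens when $\bar H^i_n$ sits in a solid torus or a ball). If some $\bar H^i_n$ is non-abelian, a purely topological lemma (Lemma~\ref{lem:topo}: a submanifold bounded by compressible tori with non-abelian $\pi_1$-image forces $M$ to be a Dehn filling of it) yields alternative~(iv). If \emph{all} thick pieces are abelian, one does not get~(iv); instead a refinement of the collapsing argument applies to all of $M$---the thick pieces being homotopically too small to obstruct it---and one lands back in alternative~(i). Two smaller remarks: your phrasing of the collapsing case (``there exists some $\epsi>0$ such that $M$ is entirely $\epsi$-thin'') should be ``for every $\epsi>0$'', equivalently $\epsi_n\to 0$, which is what collapsing theorems require; and the paper's collapsing argument goes through simplicial volume rather than the Shioya--Yamaguchi route you cite, though either is acceptable here.
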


\begin{proof}[Sketch of proof]
Up to extracting a subsequence, we distinguish several cases:

\paragraph{Case 1 (`collapsing case')} There exists a sequence $\epsi_n\to 0$ such that every point
of $(M,g_n)$ is $\epsi_n$-thin. In this case, we prove that $M$ is a graph manifold. Below
we only give a quick sketch so that the reader can see the ideas involved.
See~\cite{b3mp:hyper6} for the details.

Our approach is centered around the notion of \bydef{simplicial volume}, introduced by
Gromov in~\cite{GromovIHES}. The simplicial volume of a closed, orientable $n$-manifold $X$ is defined by
$$\| X \| := \inf \{ \sum_i  |\alpha_i|, [X] = \sum_i \alpha_i\sigma_i\},$$
where $[X]\in H_n(X,\Rr)$ is the fundamental class, the $\sigma_i$'s are continuous maps
of the standard $n$-simplex to $X$, and the $\alpha_i$'s are real numbers.

It is known~\cite{Som} that if $X$ is a Haken $3$-manifold, then $\| X \|$ equals $V_3$ times
the sum of the volumes of the hyperbolic pieces in the JSJ-decomposition of $X$, where
$V_3$ is a universal constant. In
particular,  $\| X \|=0$ if and only if $X$ is a graph manifold.

Using the Cheeger-Gromov compactness theorem, we show that
every point $x\in M$ has a neighborhood $U_x$ whose geometry approximates a ball in a
$3$-\var\ of nonnegative curvature. By Cheeger-Gromoll theory, there is a list of possible
topologies for $U_x$; for instance, $U_x$ might be a thickened torus $T^2\times I$ or
a solid torus $S^1\times D^2$. All the $U_x$'s have virtually abelian fundamental group.
Using the hypothesis that $M$ is aspherical, we show that there exists $x\in M$ such that the complement $X$ of $U_x$ in $M$
is Haken. The next step is to prove that any closed $3$-\var\ obtained from $X$ by Dehn filling has
vanishing simplicial volume. Using Thurston's hyperbolic Dehn filling theorem and classical facts
from $3$-manifold topology, one deduces that $X$ is a graph manifold, which implies that
$M$ is a graph manifold. 

\paragraph{Case 2} There exists $\epsi>0$ and a sequence $x_n\in M$ such
that $x_n$ is $\epsi$-thick with respect to $g_n$. 

Using Corollary~\ref{corol:existence 2}(iii), by further extracting a subsequence we can
assume that $(M,g_n,x_n)$ converges in the
pointed $\mathcal C^2$-sense to some pointed hyperbolic manifold $(H^1,g_\hyp,*)$.
If $H^1$ is closed, then $M$ is
diffeomorphic to $H^1$, hence $M$ is hyperbolic. Thus the interesting case is
when $H^1$ is noncompact. In this case, $(M,g_n)$ contains for large $n$ a \svar\ $\bar H^1_n$
which is a large metric ball around $x_n$ and diffeomorphic to a large ball in $H$.
Thus each boundary component of $\bar H^1_n$ is a torus corresponding to some
cusp cross-section in $H$.

Repeating this construction if necessary, we find a finite set of hyperbolic manifolds $H^1,\ldots,
H^p$ such that, for large $n$, the $\epsi$-thick part of $(M,g_n)$ is covered by disjoint \svar s $\bar H^1_n,
\ldots,\bar H^p_n$, which are approximated by large metric balls in the $H^i$'s, and are
bounded by approximately cuspidal tori. To prove that the construction stops for an integer $p$
independent of $n$, we use the uniform bound on $\vol(g_n)$ and the Margulis Lemma.

All boundary components of the $\bar H^i_n$'s are tori. If one of these tori is \incomp\ in $M$, then
we are done. Hence we assume that they are all compressible. Let $X$ be a connected \svar\ of $M$.
We say that $X$ is \bydef{abelian}
if the image of the natural homomorphism $\pi_1 X\to \pi_1 M$ is abelian. This is the case,
for instance, if some component of $\bord X $ bounds a solid torus containing $\bar H^i_n$, or
if $\bar H^i_n$ is contained in some topological $3$-ball.

We have the following purely topological lemma:
\begin{lem}\label{lem:topo}
Let $X\subset M$ be a \svar\ bounded by compressible tori. If $X$ is non-abelian, then $M$
can be obtained from $X$ by Dehn filling.
\end{lem}

The proof then splits again into two cases: if all $\bar H^i_n$'s are abelian, then a refined version of
the argument used for Case~1 shows that $M$ is a graph manifold. If there exists $i$ such that
$\bar H^i_n$ is non-abelian, then using Lemma~\ref{lem:topo} applied to $X=\bar H^i_n$,
we deduce that $M$ can be obtained from $\bar H^i_n$ by Dehn filling. Since
$H^i$ is diffeomorphic to the interior of $\bar H^i_n$, $M$ is also obtained from
$H^i$ by Dehn filling.
From the monotonicity of $\hat R(g_n)$ and the fact that $H^i$ is a pointed limit of $(M,g_n)$, we get
$$\hat R(g_0) \le \lim_{n\to\infty} \hat R(g_n) \le \hat R(H^i).$$ This finishes our sketch of proof of
Proposition~\ref{summary}.
\end{proof}

We continue the proof of Theorem~\ref{geometrisation}. If conclusion~(i), (ii), or~(iii) of
Proposition~\ref{summary} holds, then by Proposition~\ref{prop:graph} the required topological conclusion has been reached. All that
remains to do is to explain how the initial metric $g_0$ can be chosen so that
conclusion~(iv) is excluded.

At this point it is convenient to recall the following well-kown facts from hyperbolic geometry:
\begin{theo}[see~\cite{gromov:bourbaki}]\label{thm:jorgensen}
The set of volumes of hyperbolic manifolds is well-ordered.
\end{theo}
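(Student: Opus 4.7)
The plan is to prove the theorem by contradiction: assuming there is a strictly decreasing sequence $v_1>v_2>\cdots$ of volumes of hyperbolic $3$-manifolds $H_n$ with $\vol(H_n)=v_n$, I would derive an incompatibility between geometric convergence and Thurston's strict volume inequality for Dehn filling. Let $v_\infty:=\lim_n v_n\ge 0$. Since $\vol(H_n)\le v_1$ is uniformly bounded and the Margulis lemma gives each $H_n$ a non-empty thick part with a lower bound on the injectivity radius, I would choose basepoints $x_n\in H_n$ in that thick part.

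Next, I would invoke the Jørgensen–Gromov–Thurston compactness theorem, which is precisely the hyperbolic specialization of the kind of compactness used in Theorem~\ref{existence 2}(iii): the sequence of pointed hyperbolic manifolds $(H_n,g_\hyp,x_n)$ has uniformly bounded volume and uniformly two-sided bounded curvature (curvature $\equiv -1$), and its injectivity radius at the basepoints is bounded below, so it subconverges in the pointed $\mathcal{C}^\infty$ topology to a pointed complete hyperbolic manifold $(H_\infty,g_\hyp,\ast)$. Moreover, volume is lower semicontinuous under geometric convergence of hyperbolic manifolds, hence $\vol(H_\infty)\le \liminf_n v_n=v_\infty$.

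The key structural step is to show that, for $n$ large, $H_n$ is obtained from $H_\infty$ by Dehn filling. Since the convergence is pointed and smooth, arbitrarily large compact subsets of $H_\infty$ embed into $H_n$ as $\mathcal{C}^2$-close approximate isometries. Using the thick–thin decomposition and the Margulis lemma, the complement of such a large approximately isometric core in $H_n$ is a disjoint union of Margulis tubes. These tubes replace the cusp neighborhoods of $H_\infty$ either by solid tori (a non-trivial Dehn filling) or, in the trivial case, by cusp regions isometric to those of $H_\infty$. Discarding the trivial case (in which $H_n$ would eventually be isometric to $H_\infty$, forcing $v_n$ to be eventually constant and contradicting strict monotonicity), the remaining $H_n$ are genuine Dehn fillings on one or more cusps of $H_\infty$.

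At this point I would apply Thurston's strict volume inequality for Dehn filling: if $N$ is obtained from a finite-volume hyperbolic manifold $H$ by non-trivial Dehn filling, then $\vol(N)<\vol(H)$. (This is proved via the Gromov norm: $\|N\|\le \|H\|$ using a degree-one map that collapses the filling tori, with strict inequality in the non-trivial case, combined with $\vol=V_3\,\|\cdot\|$ for hyperbolic $3$-manifolds, which is the same ingredient invoked after Proposition~\ref{summary}.) Hence for all large $n$,
\[
v_n=\vol(H_n)<\vol(H_\infty)\le v_\infty,
\]
contradicting $v_n>v_\infty$. The main obstacle, and the only really deep input beyond compactness, is this strict volume inequality under Dehn filling; everything else is a routine packaging of the thick–thin decomposition and pointed smooth convergence.
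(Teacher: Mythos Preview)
The paper gives no proof of this statement: it is quoted as a known fact with a reference to Gromov's Bourbaki expos\'e, so there is no proof in the paper to compare your proposal against.

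On its own merits, your argument is the classical J\o rgensen--Thurston proof and is essentially correct. Two small points are worth tightening. First, you should make explicit that the limit $H_\infty$ has \emph{finite} volume; this follows from the very lower-semicontinuity inequality you quote, applied with the uniform bound $\vol(H_n)\le v_1$, and it is what guarantees that $H_\infty$ has only finitely many cusps so that the Dehn-filling description of the $H_n$ is meaningful. Second, the compactness step you need is simply Cheeger--Gromov compactness for complete manifolds of sectional curvature identically $-1$ with injectivity radius bounded below at the basepoint; labeling it the ``J\o rgensen--Gromov--Thurston compactness theorem'' risks sounding circular, since that name is often attached to the full package including the well-ordering you are trying to prove. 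With these cosmetic adjustments, the contradiction $v_n<\vol(H_\infty)\le v_\infty<v_n$ goes through exactly as you describe.
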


\begin{prop}[\cite{anderson:scalar}]\label{cusp closing}
Let $H$ be an open  hyperbolic $3$-manifold and $M$ be a closed $3$-manifold obtained
from $H$ by Dehn filling. Then $M$ carries a riemannian metric $g_\epsi$ such that
$\hat R(g_\epsi)> \hat R(H)$.
\end{prop}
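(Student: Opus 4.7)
The plan is to construct $g_\epsi$ by truncating each cusp of $(H,g_\hyp)$ at a high level and capping the resulting torus boundaries with solid tori carrying explicit warped-product metrics. Since $\hat R(g_\hyp)=-6\,\vol(H)^{2/3}$, it suffices to exhibit a metric on $M$ whose scalar curvature is bounded below by $-6-\epsi$ for a prescribed small $\epsi>0$ and whose total volume is strictly smaller than $\vol(H)$; then
\[
\hat R(g_\epsi)\ge -(6+\epsi)\,\vol(g_\epsi)^{2/3}>-6\,\vol(H)^{2/3}=\hat R(H)
\]
as soon as $\epsi$ is small compared with the volume gap. All the work goes into producing a cap that satisfies these two bounds simultaneously.

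Concretely, each cusp end of $H$ is isometric to $T^2\times[0,+\infty)$ with metric $dt^2+e^{-2t}h$ for some flat torus $(T^2,h)$. Fixing a large parameter $T$, truncating at $t=T$ yields a compact submanifold $H_T\subset H$ with
\[
\vol(H)-\vol(H_T)=\tfrac12\,\mathrm{Area}(h)\,e^{-2T}.
\]
The Dehn filling data picks a slope $\gamma$ on the boundary torus; choose coordinates $(\theta,u)$ on $T^2$ with $\theta$ along $\gamma$ (the future meridian). On the solid-torus cap use a doubly warped product
\[
g_\mathrm{cap}=d\tau^2+\phi(\tau)^2\,d\theta^2+\psi(\tau)^2\,du^2,\qquad\tau\in[0,\tau_0],
\]
with $\phi(0)=0$, $\phi'(0)=1$ (smoothness on the core circle) and $(\phi,\psi)$ matching the hyperbolic cusp at $\tau=\tau_0$ to first order. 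Its scalar curvature is
\[
R=-2\Bigl(\frac{\phi''}{\phi}+\frac{\psi''}{\psi}+\frac{\phi'\psi'}{\phi\psi}\Bigr).
\]
A Gromov--Lawson-type interpolation, following Anderson, then chooses $\phi,\psi$ so that on a thin collar they agree with the hyperbolic profile $e^{-t}$, on the inner part $\phi$ shrinks linearly while $\psi$ stays approximately constant, and $R\ge -6-\epsi$ throughout; moreover, the cap volume can be made $o(e^{-2T})$ by taking $T$ large.

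Patching the two pieces smoothly yields the required metric $g_\epsi$ on $M$, with $\Rmin(g_\epsi)\ge -6-\epsi$ and $\vol(g_\epsi)<\vol(H)$, so that choosing $T$ large first and then $\epsi$ sufficiently small relative to $e^{-2T}$ gives the strict inequality on $\hat R$. The main obstacle is the interpolation step: near $\tau=0$ the term $\phi''/\phi$ can be sharply negative because $\phi$ vanishes there, and one must carefully balance it against $\psi''/\psi$ and the cross term while simultaneously ensuring that the total cap volume stays strictly below the volume of the removed cusp tail. This reduces to a delicate but elementary ODE-engineering problem on $[0,\tau_0]$, worked out in \cite{anderson:scalar}; the rest of the argument is then just bookkeeping.
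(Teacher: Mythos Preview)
The paper does not prove this proposition; it simply quotes the result from \cite{anderson:scalar} and uses it as a black box in the final argument of Section~\ref{sec:hyper}. Your sketch is a faithful outline of Anderson's cusp-closing construction and is essentially correct, so there is nothing to compare against here beyond noting that you have supplied what the paper deliberately omits.
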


Consider the collection of all hyperbolic manifolds $H$ such that $M$ can be obtained
from $H$ by Dehn filling. By the theorem of Myers quoted at the end of Subsection~\ref{sub:graph},
this collection is never empty. Hence we can consider
the infimum of the volumes of these manifolds, which we will denote by $V_0(M)$.
By Theorem~\ref{thm:jorgensen}, this infimum is in fact a minimum.\footnote{Although it is not necessary for the proof, it is perhaps worth remarking that when $M$ is
hyperbolic, one can show that $V_0(M)$ is equal to the hyperbolic volume of $M$.
This follows from the well-known fact that hyperbolic Dehn filling decreases volume,
which is also conceptually connected to Proposition~\ref{cusp closing}. Hence
one can think of $V_0(M)$ as a replacement for the hyperbolic volume when the manifold
is not hyperbolic.}

We are now ready for the last argument: let $H_0$ be a hyperbolic manifold realizing the
minimum in $V_0(M)$. If $M$ is not hyperbolic, then $H_0$ is open, and $M$ is
obtained from $H_0$ by Dehn filling. By Proposition~\ref{cusp closing}, $M$ admits
a metric $g_\epsi$ such that $\hat R(g_\epsi)> \hat R(H_0)$. Applying Theorem~\ref{existence 2}
with $g_0=g_\epsi$ yields a sequence $g_1,g_2,\ldots$. If $H$ is a hyperbolic manifold
from which $M$ can be obtained by Dehn filling, then by definition of $V_0(M)$, we have
$\vol(H)\ge \vol(H_0)$. This implies that $\hat R(H)\le \hat  R(H_0)<\hat R(g_\epsi)$. Thus
conclusion~(iv) of Proposition~\ref{summary} is excluded, and applying this proposition
finishes the proof of Theorem~\ref{geometrisation} in the aspherical case.

\begin{rem}
Since we allow ourselves to pass to subsequences, we do not prove that when $M$ is hyperbolic,
hyperbolically rescaled weak
solutions starting from arbitrary metrics actually converge to the hyperbolic metric. This stronger
statement is in fact true. Its proof requires additional arguments
(see~\cite{Per2,hamilton:nonsingular,Kle-Lot}.)
\end{rem}

\begin{rem}(*)
The part of the proof that deals with compressible tori is inspired by~\cite[Section 8]{Per2}.
We have replaced Perelman's invariants $\hat\lambda$
and $\bar V$ by $\hat R$ and $V_0$ respectively. The idea to use $\hat R$ seems due to M.~Anderson
(see also~\cite[Section 93]{Kle-Lot}, where two versions of the argument along the lines suggested by Perelman
are given.)  The minimal volume $\bar V$ considered by Perelman and Kleiner-Lott is different from our $V_0$; for
instance, $\bar V(M)$ is zero if $M$ is a graph manifold, whereas $V_0(M)$ is always positive.

Our treatment of the collapsing case is completely different from the one hinted at by
Perelman~in~\cite[Section 7]{Per2}. See~\cite{sy} for another approach using Alexandrov
space theory.
\end{rem}

\section{More on weak solutions}
\label{sec:solution}
The purpose of this section is to discuss some aspects of the proof of Theorem~\ref{existence 1}. We first
give some background on the Ricci flow, then describe the metric surgery construction.
In all of this section, $M$ is a closed, \irr, $RP^2$-free $3$-manifold. 

For  an evolving metric $\{g(t)\}$ we denote the minimum (resp.~maximum) of the scalar curvature of the 
time-$t$ metric by $R_\mathrm{min} (t)$ (resp.~$R_\mathrm{max}(t)$.)

\subsection{Preliminaries}

A riemannian metric on $M$ is \bydef{normalized} if it has sectional curvature between
$1$ and $-1$, and the volume of any ball of unit  radius is greater than or equal to half the
volume of a Euclidean ball of unit radius. Since $M$ is compact, any metric can be normalized
by scaling. Moreover, the property of being locally canonical for a metric is scale invariant.
Hence it suffices to prove Theorem~\ref{existence 1} for normalized initial conditions.

Let  $I\subset [0,+\infty)$ be an interval. Following the terminology of~\cite{Mor-Tia}, we say
that an evolving metric $\{g(t)\}_{t\in I}$ has \bydef{curvature pinched toward positive}
if for every $(x,t)\in M\times I$  the following two conditions hold: 
\begin{gather}
R(x,t)  \ge - \frac{6}{4t+1} \\
R(x,t) \ge 2X(x,t) (\log X(x,t) + \log(1 + t) - 3) \qquad \text{whenever\ } X(x,t)>0,
\end{gather}
where $X(x,t)$ is the opposite of  the lowest eigenvalue of the curvature
operator of $g(t)$ at $x$.

Below we give a few basic properties of the Ricci flow on $M$.

\begin{prop}[Long time existence]\label{long time}
Let $K$ be a positive number. If $g_0$ is a metric satisfying $\vert \Rm \vert\le K$, then the Ricci flow
solution with initial condition $g_0$ exists on $[0,2^{-4} K^{-1}]$ and satisfies
$\vert \Rm \vert\le 2K$ on this interval. 
\end{prop}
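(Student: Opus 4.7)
}
The plan is the classical doubling-time argument for Ricci flow, applied with careful bookkeeping of constants. First I would invoke Hamilton's short-time existence theorem (from \cite{hamilton:three}, recalled in Section~\ref{sec:ricci}) to obtain a unique solution $\{g(t)\}_{t\in[0,T)}$ to~(\ref{rf}) with $g(0)=g_0$ on a maximal interval. The goal is to show that $T>2^{-4}K^{-1}$ and that $|\Rm|\le 2K$ throughout $[0,2^{-4}K^{-1}]$.

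The key analytic input is Hamilton's evolution inequality for the curvature tensor: under the Ricci flow in dimension $3$, there is a universal constant $c$ such that
\begin{equation*}
\frac{\bord}{\bord t}|\Rm|^2 \le \Delta |\Rm|^2 - 2|\nabla \Rm|^2 + c|\Rm|^3.
\end{equation*}
Since $M$ is compact, the parabolic maximum principle converts this into a differential inequality for $F(t):=\max_{x\in M}|\Rm(x,t)|$, giving
\begin{equation*}
\frac{d}{dt}F(t) \le c' F(t)^2,
\end{equation*}
where $c'$ is again a universal constant; integrating yields the standard ``doubling time'' estimate $F(t)\le F(0)/(1-c'F(0)\,t)$ as long as the denominator is positive. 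With $F(0)\le K$, this gives $F(t)\le 2K$ for all $t\le 1/(2c'K)$.

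Next, I would use this a priori bound to run a continuation argument: as long as $|\Rm|$ stays bounded on $[0,t]$, Shi's derivative estimates and standard compactness show that $g(t)$ extends smoothly past $t$, so the maximal time $T$ satisfies $T>1/(2c'K)$, and on $[0,1/(2c'K)]$ we indeed have $|\Rm|\le 2K$. The main point then is to verify that the universal constant $c'$ appearing in the three-dimensional evolution inequality is small enough that $1/(2c'K)\ge 2^{-4}K^{-1}$, i.e.~$c'\le 8$. This is the only nontrivial arithmetic step; it is a matter of tracing through the algebraic curvature identities used to derive the evolution inequality for $|\Rm|^2$ in dimension three. I expect this bookkeeping of the constant to be the main obstacle, since the rest of the argument is a textbook application of the parabolic maximum principle plus continuation.
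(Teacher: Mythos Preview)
The paper does not actually prove Proposition~\ref{long time}: it is listed among the ``basic properties of the Ricci flow on $M$'' and stated without proof, as a standard fact from the literature (e.g.\ \cite{hamilton:three,Cho-Kno}). Your proposal is precisely the classical doubling-time argument that underlies this fact, and the outline is correct: the evolution inequality for $|\Rm|^2$, the scalar maximum principle giving $F'\le c'F^2$, integration to $F(t)\le K/(1-c'Kt)$, and continuation via bounded curvature.

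One comment on the constant, since you flag it as the main obstacle. You are right that the only content beyond the textbook argument is checking $c'\le 8$, so that $1/(2c'K)\ge 2^{-4}K^{-1}$. In practice the exact value of $c'$ depends on the norm chosen for $\Rm$ and on how carefully one bounds the reaction term $\langle Q(\Rm),\Rm\rangle$; the constant $2^{-4}$ in the statement is not meant to be sharp, and any universal $c'$ suffices for every use of this proposition in the paper (it is only invoked qualitatively, to prolong the flow by a definite amount once a curvature bound is known). So if tracking the exact constant proves tedious, it is harmless here to state and prove the result with $2^{-4}$ replaced by an unspecified universal constant.
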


\begin{prop}[Curvature estimates]\label{curvature}
Let $I\subset [0,+\infty)$ be an interval containing $0$. Let $\{g(t)\}_{t\in I}$ be a Ricci flow
solution with normalized initial condition. Then $g(t)$ has curvature pinched toward positive.
\end{prop}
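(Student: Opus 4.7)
Both inequalities are standard consequences of Hamilton's maximum principles applied to the evolution of curvature under Ricci flow, and the normalization of $g_0$ is what supplies the required initial data. The sectional curvature bound $|\sec|\le 1$ of a normalized metric forces $R(\cdot,0)\ge -6$ pointwise, and the lowest eigenvalue $\nu$ of the curvature operator satisfies $\nu(\cdot,0)\ge -1$. So the plan is to propagate these two initial bounds separately: the first by a scalar maximum principle, the second by Hamilton's maximum principle for curvature operators.

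For the first inequality I would compute the evolution of the scalar curvature under~(\ref{rf}),
\[
\partial_t R \;=\; \Delta R + 2|\Ric|^2 \;\ge\; \Delta R + \tfrac{2}{3}R^2,
\]
using $|\Ric|^2\ge R^2/3$ in dimension three. By the parabolic maximum principle, $R(\cdot,t)$ is bounded below by the solution $\phi(t)$ of the ODE $\phi'=\tfrac{2}{3}\phi^2$ with $\phi(0)=-6$. An elementary integration gives $\phi(t)=-6/(1+4t)$, which is exactly the stated bound. This step is straightforward.

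For the second inequality (the Hamilton--Ivey pinching estimate), I would work with the curvature operator in an Uhlenbeck-type frame, so that at each point the three eigenvalues $\lambda\ge\mu\ge\nu$ satisfy the associated ODE system
\[
\dot\lambda=\lambda^2+\mu\nu,\qquad \dot\mu=\mu^2+\lambda\nu,\qquad \dot\nu=\nu^2+\lambda\mu.
\]
By Hamilton's maximum principle for systems, it suffices to exhibit a closed, convex, $O(3)$-invariant subset $K_t$ of the space of symmetric operators on $\Lambda^2$, containing the initial data $\{\nu\ge -1\}$, whose boundary is infinitesimally preserved by the above ODE at each time. The region $K_t$ is defined by the two inequalities in the proposition, rewritten in terms of $\lambda,\mu,\nu$: with $X=\max(-\nu,0)$ and $R=2(\lambda+\mu+\nu)$ one asks $R\ge -6/(4t+1)$ and $R\ge 2X(\log X+\log(1+t)-3)$. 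The verification is a case analysis on the sign of $\nu$; the logarithmic cutoff is calibrated so that, whenever equality holds in the second inequality, the ODE pushes $R$ to increase at least as fast as the right-hand side, with the $\log(1+t)$ factor absorbing the time derivative and the constant $-3$ absorbing the initial normalization $\nu\ge -1$. The main obstacle is this invariance computation: one must check that along the boundary of $K_t$ the inward derivative condition holds in every regime ($\nu>0$, $\nu\le 0$ with $\lambda+\mu\ge 0$, and $\nu\le 0$ with $\lambda+\mu<0$), and these case analyses involve somewhat delicate algebraic manipulations of the cubic expressions $\lambda^2+\mu\nu$, etc. Once invariance is established, applying Hamilton's maximum principle yields both inequalities simultaneously for all $(x,t)\in M\times I$.
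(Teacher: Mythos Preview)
The paper does not actually supply a proof of this proposition: it is stated as one of the ``basic properties of the Ricci flow on $M$'' and taken for granted, the Hamilton--Ivey pinching estimate being a well-known result from the literature. Your outline is the standard argument (scalar maximum principle for the first inequality; Hamilton's tensor maximum principle applied to the ODE system for the eigenvalues of the curvature operator for the second), and it is correct in its broad strokes, so there is no discrepancy to report.

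One small clarification on your sketch: the way you phrase the verification that the initial data lie in $K_0$ is slightly elliptic. What one actually uses is that the normalization gives $\lambda,\mu,\nu\in[-1,1]$ at $t=0$; hence $X=-\nu\le 1$ and $R=2(\lambda+\mu+\nu)\ge 2(\nu+\nu+\nu)=-6X$, and the elementary inequality $-6X\ge 2X(\log X-3)$ for $0<X\le 1$ then places the initial operator inside $K_0$. This is presumably what you had in mind when you wrote that the constant $-3$ ``absorbs the initial normalization $\nu\ge -1$.'' The invariance computation for the ODE on the boundary of $K_t$ is, as you say, the heart of the matter; it is carried out in detail in Hamilton's original paper and in the standard references on Ricci flow.
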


Seeking to prove Theorem~\ref{existence 1}, we fix a number $T>0$ and a normalized initial condition $g_0$. It follows from Proposition~\ref{curvature} that $R_\mathrm{min}(t)$ is uniformly bounded from below.
Moreover, if we have a solution defined on some interval $[0,t_0]$ and such that
$R_\mathrm{max}(t)$ is bounded from above on $[0,t_0]$, then by
Proposition~\ref{curvature}, the norm of the curvature tensor is also bounded above on this interval.
Hence by Proposition~\ref{long time}, the solution can be prolonged on a slightly larger interval
$[0,t_0+\alpha]$.

As a result, Theorem~\ref{existence 1} is only difficult to prove if the Ricci flow solution
with initial condition $g_0$ is defined on a maximal interval $[0,T')$ with $T'<T$ and
$R_\mathrm{max}(t)$ is unbounded as $t\to T'$. That is why we need a good description of the regions of $M$ where the scalar curvature becomes large. Such a description is provided by the following
theorem of Perelman (cf.~\cite[Theorem 12.1]{Per1}.)

\begin{theo}\label{WC}
For every $\epsi>0$ and every $T>0$, there exists $r=r(\epsi,T)>0$ such that if $\{g(t)\}$ is a Ricci flow
solution on $M$ defined on $[0,T]$ with normalized initial condition, then for all $(x_0,t_0)\in M\times
[0,T]$ such that $R(x_0,t_0)\ge r^{-2}$, the point $(x_0,t_0)$ has an $\epsi$-canonical neighborhood
in $(M,\{g(t)\})$.
\end{theo}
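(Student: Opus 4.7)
The proof proposal is a classical blow-up-and-contradiction argument of the type pioneered by Hamilton and refined by Perelman. Suppose the conclusion fails: then there exist $\epsi_0>0$, $T_0>0$, a sequence of Ricci flows $\{g_k(t)\}_{t\in[0,T_0]}$ on $M$ with normalized initial conditions, and spacetime points $(x_k,t_k)$ with $Q_k:=R(x_k,t_k)\to+\infty$ such that $(x_k,t_k)$ has no $\epsi_0$-canonical neighborhood in $(M,\{g_k(t)\})$. My first step is a point-picking procedure (a parabolic neighborhood version of Perelman's): replace $(x_k,t_k)$ by a nearby point, still of large scalar curvature, that is essentially maximal in a backward-in-time parabolic ball of controlled size. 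This ensures that on a suitable parabolic neighborhood around the chosen point, the scalar curvature is bounded in terms of $Q_k$.

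Next, I rescale: set $\tilde g_k(t) := Q_k\, g_k(t_k + Q_k^{-1} t)$. By construction, $R_{\tilde g_k}(x_k,0)=1$ and the scalar curvature is uniformly bounded on parabolic neighborhoods whose size (in the rescaled metric) tends to infinity. To upgrade this to a Hamilton-type compactness statement, two ingredients are needed. First, the curvature pinching from Proposition~\ref{curvature} implies, after rescaling by $Q_k\to\infty$, that any smooth pointed limit has nonnegative curvature operator; together with Shi's derivative estimates this yields local $\mathcal C^\infty$ bounds on all derivatives of curvature. Second, one must rule out collapsing: I invoke Perelman's no local collapsing theorem (based on the monotonicity of the $\mathcal W$-entropy, or equivalently reduced volume) to obtain a uniform $\kappa>0$ such that the rescaled flows are $\kappa$-noncollapsed at the basepoint on all scales up to the size of the parabolic neighborhood. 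With these two ingredients, Hamilton's compactness theorem extracts a subsequential limit $(M_\infty,g_\infty(t),x_\infty)$ which is a complete pointed ancient Ricci flow of bounded nonnegative sectional curvature, $\kappa$-noncollapsed on all scales, with $R(x_\infty,0)=1$ — that is, a $\kappa$-solution in the sense of Perelman.

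The final step is to apply the structure theorem for $3$-dimensional $\kappa$-solutions: every point of a $3$-dimensional $\kappa$-solution lies in a canonical neighborhood (a neck, a cap, or the whole manifold which is then a spherical space form or the standard round cylinder quotient). In particular $(x_\infty,0)$ has a genuine $\epsi_0/2$-canonical neighborhood in $(M_\infty,g_\infty(0))$. Pulling this neighborhood back by the approximating diffeomorphisms $\varphi_k$ coming from the $\mathcal C^\infty$-convergence, and undoing the rescaling, produces an $\epsi_0$-canonical neighborhood of $(x_k,t_k)$ in $(M,\{g_k(t)\})$ for all sufficiently large $k$, contradicting our assumption.

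The main obstacle, by a wide margin, is the input that the limit is a $\kappa$-solution and the subsequent classification: (a) the no local collapsing theorem, whose proof requires Perelman's entropy or reduced volume machinery and a careful choice of test function based on the conjugate heat kernel, and (b) the structure theorem for $\kappa$-solutions, which itself relies on the asymptotic soliton construction, Hamilton-Ivey pinching in dimension three, and a delicate compactness argument within the class of $\kappa$-solutions. The point-picking and the extraction of a smooth limit via Hamilton's compactness, while technical, are essentially bookkeeping once those two deep results are in hand; hence any treatment of Theorem~\ref{WC} should allocate the bulk of its effort to (a) and (b).
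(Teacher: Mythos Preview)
The paper does not give its own proof of Theorem~\ref{WC}; it states the result and attributes it to Perelman (citing \cite[Theorem~12.1]{Per1}). So there is no ``paper's proof'' against which to compare. Your outline is essentially Perelman's original argument, and you have correctly identified the two deep inputs --- no local collapsing and the structure theory of $\kappa$-solutions --- as the load-bearing parts.

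That said, your description of the point-picking and of how one obtains curvature bounds on parabolic regions of \emph{diverging} rescaled size is too loose, and as written would not go through. Choosing a point that is ``essentially maximal in a backward-in-time parabolic ball of controlled size'' only yields curvature bounds on a region of fixed (rescaled) size, not on regions whose radii tend to infinity. Perelman's actual selection is different: one picks $(x_k,t_k)$ with no $\epsi_0$-canonical neighborhood so that any point $(y,s)$ with $s\le t_k$ lying in a suitable parabolic region and satisfying $R(y,s)\ge 4Q_k$ \emph{does} have an $\epsi_0$-canonical neighborhood. The curvature bound at bounded (rescaled) distance is then obtained not from maximality but from a separate argument exploiting the geometry of those canonical neighborhoods (this is the ``bounded curvature at bounded distance'' step). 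Likewise, showing that the limit extends to an \emph{ancient} solution with globally bounded curvature is a further nontrivial step that you have folded into a single sentence. These are not fatal to your overall strategy, but they are exactly the places where a careful write-up has to do real work beyond the two headline theorems you flagged.
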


The definition of $\epsi$-canonical neighborhood is a bit technical, so it will not be given here.
It suffices to know two things: first, if $(x_0,t_0)$ has an $\epsi$-canonical neighborhood
in $(M,\{g(t)\})$, then $x$ has a weak  $\epsi$-canonical neighborhood in $(M,g(t_0))$;
second, it implies the differential inequality
\begin{equation}\label{eq:dRdt}
\frac{\partial R}{\partial t} < C \vert R \vert^2,
\end{equation}
where $C$ is a constant depending only on $\epsi$. (In the sequel, $\epsi$ will be fixed,
so $C$ will be universal.)

\begin{defi}
An evolving metric $\{g(t)\}_{t\in I}$ satisfies the \bydef{Canonical Neighborhoods} property
with parameter $r$ (henceforth abbreviated as $(CN)_r$) if  for all $(x,t)\in M\times
[0,T]$ such $R(x,t)\ge r^{-2}$, the point $(x,t)$ has an $\epsi$-canonical neighborhood
in $(M,g(t))$.
\end{defi}

\subsection{Surgery on $\delta$-necks}
We now describe the surgery procedure. We fix a small number $\epsi>0$. This number
should satisfy various conditions, e.g.~it should be less than ${10}^{-2}$ so that
Theorem~\ref{reconnait topo} holds.

The two main parameters that govern the surgery procedure are called $r$ and $\delta$.  The
parameter $r$ is related to the curvature scale above which points have canonical
neighborhoods; it has the dimension of length, and is smaller than the number $r(\epsi,T)$
given by Theorem~\ref{WC}. The parameter $\delta$ describes the
precision of the surgery; it is dimensionless, and much smaller than $\epsi$.
Assume for the moment that $r$ and $\delta$ have been fixed.

The following technical result is adapted from Lemma 4.3 of~\cite{Per2}.
\begin{theo}[Existence of cutoff parameters]\label{cutoff}
There exist positive numbers $h<\delta r$ and $D\ge 1$ depending only on $\delta$ and $r$, such that if
$(M,\{g(t))\}$ is a weak solution satisfying $(CN)_r$, $t$ is a time in the domain of definition,
and $x,y,z$ are
points of $M$ such that $R(x,t)\le 2r^{-2}$, $R(y,t)=h^{-2}$, $R(z,t)\ge Dh^{-2}$,
and $y$ lies on a minimizing $g(t)$-geodesic connecting $x$ to $z$, then $y$ is the center
of a $\delta$-neck.
\end{theo}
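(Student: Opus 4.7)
The plan is to argue by contradiction, following Perelman's strategy (cf.\ the original Lemma 4.3 in \cite{Per2}). Fix $\delta, r > 0$ and suppose the conclusion fails: then there exist sequences $h_n \to 0$ (with $h_n < \delta r$) and $D_n \to \infty$, weak solutions $(M, \{g_n(\cdot)\})$ satisfying $(CN)_r$, times $t_n$, and triples $(x_n, y_n, z_n)$ meeting the three curvature conditions, with $y_n$ on a minimizing $g_n(t_n)$-geodesic from $x_n$ to $z_n$, but $y_n$ not the center of a $\delta$-neck in $(M, g_n(t_n))$. Rescale the time-$t_n$ metric by $h_n^{-2}$: set $\hat g_n := h_n^{-2} g_n(t_n)$, so that $\hat R(y_n) = 1$, $\hat R(x_n) \le 2(h_n/r)^2 \to 0$ and $\hat R(z_n) \ge D_n \to \infty$. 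Since $R(y_n, t_n) = h_n^{-2} \ge r^{-2}$, the hypothesis $(CN)_r$ applies at $y_n$.

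The first serious step is to extract a pointed $\mathcal C^{[\epsi^{-1}]+1}$-limit $(M_\infty, g_\infty, y_\infty)$ of the sequence $(M, \hat g_n, y_n)$. The needed uniform curvature and injectivity radius bounds on balls of definite radius around $y_n$ come from the $\epsi$-canonical neighborhood structure: points of large rescaled curvature lie in necks, caps or spherical pieces, whose geometry (and that of its higher derivatives) is explicit, while points of smaller rescaled curvature contribute only lower-order terms. The curvature-pinched-toward-positive condition (Proposition~\ref{curvature}), rescaled by the diverging factor $h_n^{-2}$, forces $g_\infty$ to have nonnegative sectional curvature.

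The second step is to produce a bi-infinite minimizing geodesic (a line) through $y_\infty$. Having an $\epsi$-canonical neighborhood yields an estimate of the form $|\nabla R^{-1/2}| \le C$ at points where $R \ge r^{-2}$. Integrating this bound along the minimizing segment from $y_n$ toward $x_n$ shows that, in rescaled units, $\hat R$ can drop from $1$ to the canonical neighborhood scale $(h_n/r)^2$ only after traversing distance of order $r/h_n \to \infty$; a fortiori the rescaled geodesic extends indefinitely toward $x_n$ and produces a ray from $y_\infty$. On the $z_n$-side, a parallel argument (if necessary after a point-picking that replaces $y_n$ by a subsequent point of curvature $h_n^{-2}$) furnishes a ray in the opposite direction. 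These combine into a line in $(M_\infty, g_\infty)$. The Cheeger--Gromoll splitting theorem then makes a neighborhood of $y_\infty$ split isometrically as $\Rr \times \Sigma$, and the $\epsi$-canonical neighborhood at $y_\infty$, together with $\hat R(y_\infty)=1$, identifies $\Sigma$ as a round $2$-sphere of scalar curvature $1$. Hence on arbitrarily large balls around $y_n$ the metric $(M, \hat g_n)$ is arbitrarily close to the standard neck $N_\delta$, contradicting our choice of counterexamples for $n$ large.

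The main obstacle is producing the line. The $x_n$-direction is handled cleanly by the scalar-curvature gradient estimate packaged in the canonical neighborhood property, but the $z_n$-direction is subtler, since the curvature diverges and the minimizing segment could a priori be swallowed by a cap or a closed spherical component before any ray survives. Ruling this out uses both the minimality of the geodesic (which prevents the segment from backtracking through the core of a cap) and the hypothesis $D_n \to \infty$ (which forces the high-curvature region to extend far enough past $y_n$ in the rescaled metric to carve out a ray). Once the line is in hand, the splitting and the identification of $\Sigma$ are routine.
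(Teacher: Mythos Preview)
The paper does not supply its own proof of this theorem: it is stated as a technical result ``adapted from Lemma~4.3 of~\cite{Per2}'' and then used as a black box in the surgery construction. Your sketch follows Perelman's original argument and is correct in its broad strokes --- rescale so that $\hat R(y_n)=1$, pass to a complete nonnegatively curved limit via the pinching estimate and the canonical-neighbourhood bounds, produce a line through $y_\infty$, split by Cheeger--Gromoll, and identify the cross-section as a round $S^2$.

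The one place where your write-up is loose is the production of the ray on the $z_n$-side. Calling it ``a parallel argument'' is misleading: integrating the gradient bound $|\nabla R^{-1/2}|\le C$ from $\hat R=1$ up to $\hat R=D_n$ gives only $\hat d(y_n,z_n)\ge (1-D_n^{-1/2})/C$, which is bounded, so the mechanism is genuinely different from the $x_n$-side. What actually forces $\hat d(y_n,z_n)\to\infty$ is the \emph{bounded curvature at bounded distance} phenomenon (a consequence of the canonical-neighbourhood structure, proved separately): the rescaled curvature on every ball $B(y_n,\rho)$ is bounded by some $K(\rho)$ independent of $n$, so since $\hat R(z_n)=D_n\to\infty$, the point $z_n$ must leave every such ball. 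Your parenthetical about ``point-picking that replaces $y_n$ by a subsequent point of curvature $h_n^{-2}$'' is not the right device here; point-picking enters, if at all, in establishing bounded curvature at bounded distance, not in relocating the basepoint. Your final paragraph does gesture at the correct reason, so this is an imprecision of exposition rather than a gap in the overall strategy.
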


We now carry out the construction outlined in Section~\ref{sec:poincare}, setting the
curvature threshold to $2Dh^{-2}$. Precisely, this means the following: we let $t_0\le T$
be the first time where $R_\mathrm{max}(t)$ reaches $\Theta:=2Dh^{-2}$ (if there is no such time, then
there is nothing to prove.) If all points of $(M, g(t_0))$
have scalar curvature greater than $r^{-2}$, then by Theorem~\ref{WC} they all have canonical neighborhoods, and we are done. From now on, we assume that it is not the case.
We partition $M$ into three subsets $\mathcal R,\mathcal O,\mathcal G$ defined as follows:

\begin{align*}
 \mathcal R &:= \{z \in M \mid Dh^{-2} \le R(z,t_0)\}\\
\mathcal O &:= \{ y\in M \mid 2 r^{-2} < R(y,t_0) < Dh^{-2}\} \\
\mathcal G &:= \{ x\in M \mid R(x,t_0) \le 2r^{-2} \} .
\end{align*}

Intuitively, $\mathcal R$ is the set of `red' points, where the scalar curvature is huge, and
a singularity is threatening to appear. The purpose of the surgery operation is to
remove those points. The set $\mathcal G$ consists of `green' points, where
$R$ cannot be large, and $\mathcal O$ is the set of `orange' points, where $R$ may be
large, but not that much.

By assumption, $\mathcal G$ and $\mathcal R$ are not empty. Let $x$ be a
point of $\mathcal G$, $z$ be a point of $\mathcal R$, and $\gamma$ be a minimizing
geodesic connecting $x$ to $z$. Then by the intermediate
value theorem, there exists a point $y\in\gamma$ whose scalar curvature is exactly $h^{-2}$.
Applying Theorem~\ref{cutoff}, we obtain a $\delta$-neck centered on $y$.
One can show that this can be repeated finitely many times to yield a finite
collection of disjoint $\delta$-necks $N_1,\ldots N_p$ whose union separates $\mathcal R$ from $\mathcal G$.

Let us denote by $X_1,\ldots,X_q,X_{q+1},\ldots, X_{q+s}$ the connected components of
$M\setminus \bigcup_i N_i$, where $X_j\subset \mathcal G\cup \mathcal O$ for $j\le q$
and $X_j\subset\mathcal O\cup\mathcal R$ for $j>q$.
By a relative version of Theorem~\ref{reconnait topo}, the high curvature components
$X_{q+1},\ldots, X_{q+s}$, which are covered by canonical neighborhoods, 
are diffeomorphic to $B^3$ or $S^2\times I$. Since $M$ is irreducible, a straightforward topological argument shows that
there is a not-so-large-curvature component $X_{j_0}$, $j_0\le q$, such that
$M\setminus X_{j_0}$ is a finite, disjoint union of topological
$3$-balls in $M$, and each point of $\bord X_{j_0}$ is the center of a $\delta$-neck.

In other words, $M\setminus X_{j_0}$ is covered by a union $\Sigma(t_0)$ of $\delta$-caps. The surgery
operation consists in replacing those caps by special caps, called `almost standard
caps'. The precise definition is too technical to be discussed here. The main points
are that the post-surgery metric $g_+(t_0)$ has curvature pinched toward positive, and
at all points of $\Sigma(t_0)$ its scalar curvature is comparable to $h^{-2}$. As a result,
$R_\mathrm{max} (g_+(t_0))$ is bounded above by $\Theta/2$.
After this, we restart the Ricci flow with new initial condition $g_+(t_0)$.

As long as condition $(VC)_r$ is satisfied, we can iterate this construction. Since points
of scalar curvature between $r^{-2}$ and $\Theta$ have canonical neighborhoods, they
satisfy inequality~(\ref{eq:dRdt}). Together with the fact that surgery makes $\Rmax$ drop
by at least half its value, this ensures that there is a definite lower bound for the time span
between two consecutive surgeries. Hence the iteration of this process produces a weak
solution.

To prove Theorem~\ref{existence 1}, we need to show that the parameters $r$ and $\delta$
can be chosen so that the construction can indeed be iterated until it produces a
weak solution defined on $[0,T]$ or a locally canonical metric.
Theorem~\ref{WC} does not
suffice for this; instead, we need to generalize it to a special class of weak solutions.
This is done in a complicated limiting argument involving such notions as $\kappa$-solutions
or $\kappa$-noncollapsing, which I will not attempt to explain here.

\begin{rem}(*)
Since we work on a compact time interval rather than $[ 0,+\infty)$, the parameters $r$ and $\delta$ are fixed rather than time-dependent as in~\cite{Per2}. This simplification was
observed by Perelman in~\cite{Per3}. To prove Theorem~\ref{existence 2} however, we need
to work with time-dependent parameters, which creates an additional layer of complexity.
\end{rem}

\begin{rem}(*)
As we already noticed, the main difference between our construction and Perelman's is that we do surgery before the singularity appears, rather than at the singular time. As a consequence, we do not
have to discuss horns, capped horns and double horns. Our solution
for a given initial condition may be very different from Perelman's: at one extreme,
Ricci flow may be defined for all time: then Perelman's construction produces the Ricci flow
solution, while ours may lead to surgery if the curvature reaches the threshold. Neither
construction leads to a \emph{canonical} `Ricci flow through singularities'.

Another consequence of this choice is that we do not explicitly address the question of formation
of singularities in the Ricci flow. The reader should be aware, however, that the difficulties we
have to face and the way they are overcome (e.g.~blow-up arguments) are essentially the same.
\end{rem}

\begin{rem}(*)
Our argument to rule out accumulation of surgeries is different from Perelman's, and does not
use the volume. This is crucial for the generalization to infinite volume solutions discussed
below.
\end{rem}

\section{Extension to non-compact 3-manifolds}

Let $M$ be a possibly noncompact $3$-manifold without boundary. A riemannian metric
$g$ on $M$ has \bydef{bounded geometry} if it has bounded sectional curvature
and injectivity radius bounded away from zero. An evolving metric $\{g(t)\}_{t\in I}$ is
\bydef{complete} (resp.~\bydef{has bounded geometry}, resp.~\bydef{has bounded
sectional curvature}) if for each $t\in I$ the metric
$g(t)$ has the corresponding property.

The results of Section~\ref{sec:poincare}
can be extended to this context. For simplicity we give the existence result for \emph{irreducible}
open $3$-manifolds (note that such a manifold is automatically $RP^2$-free.)

\begin{theo}\label{existence open}
Let $M$ be an open, \irr\  $3$-manifold not diffeomorphic to $\Rr^3$. Let $g_0$ be a complete riemannian metric on $M$
which has bounded geometry. Then there exists a complete
weak solution $\{g(t)\}_{t\in [0,\infty)}$ of bounded geometry
on $M$ with initial condition $g(0)=g_0$.
\end{theo}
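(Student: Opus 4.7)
The plan is to rerun the Ricci-flow-with-surgery construction of Section~\ref{sec:solution} directly on the open manifold $M$, but with \emph{time-dependent} surgery parameters $r(t),\delta(t),h(t)$ as in Theorem~\ref{existence 2}, and to check that each ingredient extends to complete metrics of bounded geometry. First, short-time existence of the classical Ricci flow starting from $g_0$ is provided by Shi's theorem, whose derivative estimates also guarantee that bounded geometry is preserved on any compact time interval on which the curvature stays bounded. Second, the Hamilton--Ivey pinching estimate underlying Proposition~\ref{curvature} is proved by a maximum-principle argument that extends to complete $3$-manifolds of bounded curvature, so the ``curvature pinched toward positive'' property remains available throughout. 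Third, Perelman's canonical neighborhood theorem~\ref{WC} and the cutoff parameters theorem~\ref{cutoff} rest on $\kappa$-noncollapsedness via reduced-volume monotonicity, followed by the compactness theorem for pointed Ricci flows; bounded geometry of $g_0$ furnishes the seed $\kappa$, and both ingredients go through verbatim along pointed sequences in the open setting. The metric surgery itself is an entirely local operation, carried out inside $\delta$-necks and replacing caps by almost standard caps, so the construction of $g_+(t_0)$ from $g(t_0)$ is literally the one from Section~\ref{sec:solution}.

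The topological step of isolating the central ``green'' component $X_{j_0}$ uses irreducibility of $M$ exactly as in the compact case, together with the hypothesis $M\not\cong \mathbf{R}^3$: the latter is what prevents all components of $M\setminus\bigcup_i N_i$ from being simultaneously capped off as $3$-balls, so a distinguished surviving not-so-large-curvature component remains and the surgery genuinely modifies $g$ without altering the diffeomorphism type of $M$.

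The main obstacle, and the substantive new content beyond Section~\ref{sec:solution}, is to iterate the construction on the infinite interval $[0,\infty)$ while preserving completeness and bounded geometry and ruling out accumulation of surgery times. Following the strategy of Theorem~\ref{existence 2}, I would choose $r(\cdot),\delta(\cdot),h(\cdot)$ as slowly decreasing step functions of $t$ adapted to finer and finer time slabs. On each slab, the inequality $\partial R/\partial t<C R^2$ valid at points of large scalar curvature (which, by the extended Theorem~\ref{WC}, have canonical neighborhoods) combined with the fact that each surgery drops $R_{\max}$ from the threshold $\Theta(t)=2Dh(t)^{-2}$ to at most $\Theta(t)/2$ yields a definite positive lower bound on the time between consecutive surgeries; hence only finitely many surgeries occur per unit time. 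Crucially, as emphasized in the last remark of Section~\ref{sec:solution}, this accumulation argument does \emph{not} appeal to finiteness of the total volume, which is indispensable here since $\mathrm{vol}(g(t))$ may be infinite. At each surgery time the almost standard caps have scalar curvature and all higher derivatives controlled uniformly in terms of $h(t)$, so $g_+(t)$ is again complete and of bounded geometry; between surgeries Shi's estimates propagate bounded geometry forward, while completeness is preserved because the surgery is local and the new caps are compact.

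Concatenating the smooth pieces across the surgery times gives a piecewise $\mathcal{C}^1$ evolving metric on $[0,\infty)$ satisfying the weak-solution conditions (i)--(ii) of Definition~\ref{def:ws}, complete and of bounded geometry at every time, which is the desired $\{g(t)\}_{t\in[0,\infty)}$.
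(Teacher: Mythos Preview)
Your overall strategy coincides with the paper's: carry the surgery construction of Section~\ref{sec:solution} over to complete metrics of bounded geometry, use Shi's short-time existence and derivative estimates, and exploit that the accumulation argument is volume-free. Two points, however, are handled too loosely and are precisely where the open case diverges from the compact one.

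First, your account of the role of $M\not\cong\Rr^3$ is garbled. The paper invokes it through Theorem~\ref{reconnait topo open}: if at a prospective surgery time $t_0$ the green set $\mathcal G$ were empty, every point would have a weak canonical neighborhood, $g(t_0)$ would be locally canonical, and Theorem~\ref{reconnait topo open} together with irreducibility would force $M\cong\Rr^3$. So the hypothesis guarantees $\mathcal G\neq\emptyset$ \emph{before} any necks $N_i$ are chosen; it is not a statement about the components of $M\setminus\bigcup_i N_i$ after the fact. You never cite Theorem~\ref{reconnait topo open}, but it is the genuine new input.

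Second, the topological step is \emph{not} ``exactly as in the compact case''. In the open setting $\mathcal R,\mathcal O,\mathcal G$ may each have infinitely many components, the separating family of $\delta$-necks may be infinite, and the surviving not-so-large-curvature region $X$ is noncompact with complement a possibly \emph{infinite} disjoint union of $3$-balls. One must check that the irreducibility argument isolating $X$ works for a merely locally finite family of $2$-spheres, and that performing surgery on infinitely many balls simultaneously still yields a complete metric of bounded geometry (the uniform geometry of almost standard caps is what makes this go through). The paper flags exactly these infinitary issues; your sketch suppresses them.
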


Theorem~\ref{existence open} is proved using the construction described in Section~\ref{sec:solution}
and the following version of Theorem~\ref{reconnait topo} for
open $3$-manifolds:

\begin{theo}\label{reconnait topo open}
Let $(M,g)$ be an open riemannian $3$-manifold. If $g$ is locally canonical,
then $M$ is diffeomorphic to $\Rr^3$ or $S^2\times \Rr$. In particular, if $M$ is \irr, then $M$ is diffeomorphic to $\Rr^3$.
\end{theo}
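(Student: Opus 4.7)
The plan is to adapt the neck-and-cap assembly argument sketched for Theorem~\ref{reconnait topo} to the noncompact setting. First I would observe that case~(iii) of Definition~\ref{def:cn} cannot occur at any point of $M$: such a neighborhood would have to equal $M$ (since $M$ is connected) and be spherical, forcing $M$ to be closed, which contradicts our assumption that $M$ is open. Hence every point of $M$ is the center of an $\epsi$-neck or of an $\epsi$-cap with $\epsi=10^{-2}$.

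I would then split into two cases, as in the compact statement. If every point of $M$ is the center of a neck, the smallness of $\epsi$ ensures that on overlaps of two necks the $S^2$-factors are nearly parallel, and the standard gluing argument assembles them into a smooth foliation of $M$ by $2$-spheres. This realizes $M$ as the total space of an $S^2$-bundle over a connected $1$-\var; orientability of $M$ forces the bundle to be trivial, and openness of $M$ forces the base to be $\Rr$, so $M\cong S^2\times\Rr$. If instead some point lies in a cap $V_0\cup W_0$, I would start from the sphere $\bord V_0$ and extend the foliation outward maximally by attaching further necks. Either the extension terminates at a second cap, in which case $M\cong S^3$ and is closed, contradicting openness of $M$, or it continues to infinity, in which case $M$ is the union of the closed $3$-ball $\bar V_0$ with a half-cylinder $S^2\times[0,+\infty)$ glued along $\bord V_0$, hence diffeomorphic to $\Rr^3$.

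The ``in particular'' clause is then immediate: the manifold $S^2\times\Rr$ is not \irr, since the sphere $S^2\times\{0\}$ separates it into two copies of $S^2\times[0,+\infty)$, neither of which is a $3$-ball. So when $M$ is \irr\ the only surviving possibility is $M\cong\Rr^3$.

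The main obstacle is the same as in the closed case: verifying that two overlapping $\epsi$-necks, and a cap sitting next to a neck, align coherently into one smooth $2$-sphere foliation. Once that gluing lemma is available, the classification of $S^2$-bundles over connected $1$-\var s completes the argument. A small additional subtlety specific to the open case is ensuring that the maximal extension of the neck chain actually sweeps out all of $M$; this should follow from connectedness of $M$ together with the fact that any canonical neighborhood of a point meeting the already-foliated region must fit onto that same foliation.
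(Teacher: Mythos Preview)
The paper does not actually supply a proof of Theorem~\ref{reconnait topo open}; it is stated and then immediately used in the sketch of proof of Theorem~\ref{existence open}. Your proposal is the natural adaptation of the paper's sketch for the closed case (Theorem~\ref{reconnait topo}) to the open setting, and it is correct: ruling out case~(iii), running the neck-gluing to get an $S^2$-fibration over a connected boundaryless $1$-manifold (hence $\Rr$, since $M$ is noncompact), and in the cap case extending the neck chain to infinity to obtain $\Rr^3$, is exactly what one expects. The ``in particular'' clause is handled correctly as well.

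One small point worth tightening: when you say the base of the $S^2$-fibration is a connected $1$-manifold, you should note explicitly that it is \emph{without boundary} (since $M$ has empty boundary) and second countable (as a quotient of $M$ by an open map), so that the only possibilities really are $S^1$ and $\Rr$. Likewise, in the cap case, the fact that the maximal neck extension exhausts $M$ is, as you note, the one genuinely new subtlety compared to the compact case; the open-and-closed argument you sketch (any point on the frontier has a canonical neighborhood overlapping the foliated region, hence is absorbed into it) is the right way to close that gap.
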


\begin{proof}[Sketch of proof of Theorem~\ref{existence open}]
One defines parameters $r,\delta,h,D,\Theta$ as in the compact case.
If the maximum of the scalar curvature reaches the threshold $\Theta$ for some time $t_0$, there is a
similar decomposition of $M$ into three subsets $\mathcal R$, $\mathcal O$, and $\mathcal G$,
which may have infinitely many components. By Theorem~\ref{reconnait topo open}, $g(t_0)$ cannot be locally
canonical, so $\mathcal G$ is nonempty. Using an generalization of Theorem~\ref{cutoff}, one finds a (possibly infinite) set of
spheres in $\mathcal O$ which are middle spheres of $\delta$-necks and separate $\mathcal
R$ from $\mathcal G$.  Using elementary $3$-manifold topology, one deduces from
irreducibility of $M$ that there is a noncompact submanifold $X\subset M$ containing
no point of $\mathcal R$, and whose complement is a (possibly infinite) set of disjoint
$3$-balls. Then one performs geometric surgery on each of these $3$-balls in order to
decrease the maximal scalar curvature. The rest of the argument goes through without
major changes.
\end{proof}

Theorem~\ref{existence open} has an application to positive scalar curvature, which we now describe. Let
us denote by $\Rmin(g)$ the infimum of the scalar curvature of a metric $g$ (which
may or may not be attained.) Following~\cite{gl:scalar}, we say that $g$ has
\bydef{uniformly positive scalar curvature} if $\Rmin(g)>0$.

\begin{theo}\label{positive scalar irreducible}
Let $M$ be an open, \irr\ $3$-manifold which carries a complete metric $g_0$ of bounded
geometry and uniformly positive scalar curvature. Then $M$ is diffeomorphic to $\Rr^3$.
\end{theo}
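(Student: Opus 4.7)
\medskip
The plan is a short proof by contradiction, reducing Theorem~\ref{positive scalar irreducible} to the blow-up of $\Rmin$ along Ricci flow with positive initial scalar curvature.  Assume $M$ is \emph{not} diffeomorphic to $\Rr^3$.  Then Theorem~\ref{existence open} applies and produces a complete weak solution $\{g(t)\}_{t\in [0,\infty)}$ of bounded geometry on $M$ with $g(0)=g_0$.  The goal will be to show that uniform positivity of the scalar curvature of $g_0$ forces $\Rmin(g(t))\to +\infty$ in finite time, contradicting the fact that $\{g(t)\}$ is defined on all of $[0,\infty)$.  Once this contradiction is obtained, the irreducibility of $M$ together with Theorem~\ref{reconnait topo open} forces $M\cong \Rr^3$.

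The first key step is the standard scalar curvature evolution estimate.  At every regular time the Ricci flow equation gives
$$\frac{\partial R}{\partial t}=\Delta R + 2|\Ric|^2 \;\ge\; \Delta R + \tfrac{2}{3} R^2,$$
using the pointwise inequality $|\Ric|^2\ge \tfrac{1}{3}R^2$ valid in dimension~$3$.  Since $g(t)$ has bounded geometry (in particular bounded curvature at every fixed time) and is complete, a maximum principle argument on a noncompact manifold---carried out either via Omori--Yau or via a standard exhaustion/cutoff argument using completeness---shows that $\phi(t):=\Rmin(g(t))$ is a supersolution of the ODE $y'=\tfrac{2}{3}y^2$ on any interval of regular times, with $\phi(0)=\Rmin(g_0)>0$.

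The second key step is to propagate this estimate across surgeries.  By condition~(ii)(a) of Definition~\ref{def:ws}, at every singular time $t$ one has $\Rmin(g_+(t))\ge \Rmin(g(t))$, so surgery cannot decrease $\phi$.  Combining the regular and singular behaviour, $\phi$ dominates the solution $y(t)=\bigl(\Rmin(g_0)^{-1} - \tfrac{2t}{3}\bigr)^{-1}$ of the ODE with the same initial value.  This solution blows up at $t_\ast = \tfrac{3}{2\Rmin(g_0)}<+\infty$.  Hence $\Rmin(g(t))\to +\infty$ as $t\to t_\ast$, which contradicts the existence of the weak solution on $[0,\infty)$, and completes the proof.

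The main obstacle is the rigorous application of the maximum principle to $R$ on the noncompact manifold $M$, since the infimum of $R$ need not be attained.  This is precisely why Theorem~\ref{existence open} is stated with bounded geometry built in: with a uniform curvature (and hence Ricci) bound at each time, one can either apply Omori--Yau to a suitable shifted quantity or run a direct parabolic comparison using exhaustion by compact sets and the completeness of $g(t)$.  The rest of the argument is purely formal bookkeeping based on Definition~\ref{def:ws}.
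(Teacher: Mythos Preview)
Your proof is correct and follows essentially the same route as the paper: assume $M\not\cong\Rr^3$, invoke Theorem~\ref{existence open} to get a weak solution on $[0,\infty)$, and derive the ODE lower bound $\Rmin(t)\ge \Rmin(0)/(1-\tfrac{2t}{3}\Rmin(0))$ from the evolution inequality $\partial_t R\ge \Delta R + \tfrac{2}{3}R^2$ together with condition~(ii)(a) at singular times, obtaining a finite-time blow-up and hence a contradiction. One small remark: your final sentence invoking Theorem~\ref{reconnait topo open} again is superfluous, since the contradiction already forces $M\cong\Rr^3$.
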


\begin{proof}
If $M$ is not diffeomorphic to $\Rr^3$, then we can apply Theorem~\ref{existence open}
to get a complete weak solution $\{g(t)\}$ of bounded geometry with initial condition $g_0$,
defined on $[0,+\infty)$.
A maximum principle argument (cf.~\cite[\S 2.1]{Cao-Zhu}) together with condition~(ii)(a) of the definition
of weak solution shows that
$$\Rmin(t) \ge \frac{\Rmin(0)}{1-2t \Rmin(0)/3}$$
if $g(t)$ is a complete weak solution with bounded sectional curvature.
This gives an upper bound on the lifetime of such a solution, hence a contradiction.
\end{proof}

Theorem~\ref{positive scalar irreducible} is a special case of a more general result. In order to state it, we need a definition
of connected sum allowing infinitely many summands.

If $\mathcal X$ is a class of closed $3$-manifolds, we say that a $3$-manifold $M$ is a
\bydef{connected sum} of members of $\mathcal X$ if there exists a locally finite
simplicial tree $T$ and a map $v\mapsto X_v$ which associates to each vertex of $T$
a manifold in $\mathcal X$, such that by removing from each $X_v$ as many $3$-balls
as vertices incident to $v$ and gluing the thus punctured $X_v$'s to each other along the
edges of $T$, one obtains a $3$-manifold diffeomorphic to $M$.

For instance, both $\Rr^3$ and $S^2\times\Rr$ can be viewed as connected sums of $S^3$'s,
the tree being a half-line in the former case, and a line in the latter.

It is easy to see that if one takes $\mathcal X$ to be the class containing spherical
$3$-manifolds and manifolds diffeomorphic to $S^2\times S^1$, then any connected sum
of members of $\mathcal X$ admits a complete riemannian metric of uniformly positive scalar curvature. The
following result is a partial converse to this:
\begin{theo}\label{positive scalar general}
Let $M$ be a $3$-manifold which carries a complete metric $g_0$ of bounded
geometry and uniformly positive scalar curvature. Then $M$ is a connected sum of
spherical manifolds and copies of $S^2\times S^1$.
\end{theo}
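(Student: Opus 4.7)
\bigskip
\emph{Proof sketch proposal.}
The plan is to run a Ricci flow with a topology-changing adaptation of the surgery construction sketched in Section~\ref{sec:solution}, starting from $g_0$, and to exploit the uniformly positive lower bound on scalar curvature to force extinction in finite time. The tree of $2$-sphere surgeries will then realize $M$ as the desired connected sum.

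First, I would extend Theorem~\ref{existence open} to an arbitrary $3$-manifold of bounded geometry, dropping the irreducibility hypothesis and allowing surgery to change the topology. At each surgery time $t_0$, for every $\delta$-neck selected by (an open, non-irreducible version of) Theorem~\ref{cutoff}, cut $M$ along the middle $2$-sphere and cap off each resulting boundary sphere by a topological $3$-ball carrying a metric close to an almost standard cap. Simultaneously, discard any connected component that is entirely covered by canonical neighborhoods at time $t_0$; by Theorems~\ref{reconnait topo} and~\ref{reconnait topo open} each such piece is spherical, diffeomorphic to $S^2\times S^1$, $\Rr^3$, or $S^2\times\Rr$.

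Second, I would force extinction in finite time. Condition (ii)(a) of Definition~\ref{def:ws}, together with the maximum-principle computation used in the proof of Theorem~\ref{positive scalar irreducible}, persists across all surgery times and gives
$$\Rmin(g(t)) \ge \frac{\Rmin(g_0)}{1 - 2t\Rmin(g_0)/3}.$$
Since $\Rmin(g_0)>0$, the right-hand side diverges at $T^\ast := 3/(2\Rmin(g_0))$, so just before $T^\ast$ the scalar curvature of every surviving point exceeds the canonical-neighborhood threshold $r^{-2}$. By Theorem~\ref{WC} (applied to the whole evolving metric through all surgeries) the whole remaining metric is then locally canonical, and a final application of Theorems~\ref{reconnait topo} and~\ref{reconnait topo open} shows that each surviving component is again spherical, diffeomorphic to $S^2\times S^1$, $\Rr^3$, or $S^2\times\Rr$.

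Third, I would encode the history of surgeries as a rooted simplicial tree $T$: vertices are the components discarded (or surviving at $T^\ast$) over the course of the flow, and edges are the surgery $2$-spheres (each bounds a cap-ball on either side, lying in the two adjacent vertex manifolds). Undoing all cap-fillings re-identifies $M$ with the connected sum along $T$ of the vertex manifolds, in the sense of the definition preceding Theorem~\ref{positive scalar general}. Since $\Rr^3$ and $S^2\times\Rr$ are themselves connected sums of copies of $S^3$ along a half-line and along a line respectively, refining $T$ accordingly writes $M$ as a connected sum of spherical manifolds and copies of $S^2\times S^1$, as required.

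The main obstacle I expect is the bookkeeping in the noncompact setting: one must show that $T$ is locally finite, so that only finitely many surgery spheres meet any compact region of $M$ and the infinite connected sum of the definition preceding Theorem~\ref{positive scalar general} is well defined; one must also propagate the scalar curvature lower bound across the possibly infinite cascade of surgery times, which is where condition (ii)(a) of Definition~\ref{def:ws} is crucial. Both points rest on the bounded-geometry hypothesis on $g_0$, together with the fact emphasized in the last starred remark of Section~\ref{sec:solution} that the argument ruling out accumulation of surgeries does not use the volume and therefore adapts from the compact case to the bounded-geometry infinite-volume setting considered here.
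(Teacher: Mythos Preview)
Your proposal is correct and follows essentially the same approach as the paper: the paper's own ``proof'' of Theorem~\ref{positive scalar general} is merely a one-paragraph announcement that one must generalize the notion of weak solution to allow the manifold to change with time, so that surgery may disconnect it into (possibly infinite) connected sums, with the analytic essence already contained in Theorem~\ref{existence open}. Your sketch fleshes out exactly this program---topology-changing surgery, finite extinction via the $\Rmin$ evolution inequality, and a tree recording the surgery history---and correctly flags local finiteness of the surgery tree and persistence of the scalar-curvature bound as the points requiring care.
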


To prove Theorem~\ref{positive scalar general}, one needs to work with a more general notion of weak solution,
where the manifold is allowed to change with time. The surgery process may
disconnect the manifold, and break it into possibly infinite connected sums. However, the
essence of the proof is already contained in Theorem~\ref{existence open}.


\bibliographystyle{alpha}
\bibliography{ricci}

\end{document}